\let\origsection=\section \def\section{\@ifstar{\origsection*}{\mysection}} 
\def\mysection{\@startsection{section}{1}\z@{.7\linespacing\@plus\linespacing}{.5\linespacing}{\normalfont\scshape\centering\S}}
\renewcommand{\PrintDOI}[1]{\doi{#1}}
\numberwithin{equation}{section}
\numberwithin{figure}{section}
\let\polishlcross=\l
\def\l{\ifmmode\ell\else\polishlcross\fi}
\def\moverlay{\mathpalette\mov@rlay}
\def\mov@rlay#1#2{\leavevmode\vtop{   \baselineskip\z@skip \lineskiplimit-\maxdimen
   \ialign{\hfil$\m@th#1##$\hfil\cr#2\crcr}}}
\newcommand{\charfusion}[3][\mathord]{
    #1{\ifx#1\mathop\vphantom{#2}\fi
        \mathpalette\mov@rlay{#2\cr#3}
      }
    \ifx#1\mathop\expandafter\displaylimits\fi}
\DeclareFontFamily{U}  {MnSymbolC}{}
\DeclareSymbolFont{MnSyC}         {U}  {MnSymbolC}{m}{n}
\DeclareFontShape{U}{MnSymbolC}{m}{n}{
    <-6>  MnSymbolC5
   <6-7>  MnSymbolC6
   <7-8>  MnSymbolC7
   <8-9>  MnSymbolC8
   <9-10> MnSymbolC9
  <10-12> MnSymbolC10
  <12->   MnSymbolC12}{}
\DeclareMathSymbol{\powerset}{\mathord}{MnSyC}{180}
\theoremstyle{plain}
\newtheorem{theorem}{Theorem}[section]
\newtheorem{thm}[theorem]{Theorem}
\newtheorem{lem}[theorem]{Lemma}
\newtheorem{proposition}[theorem]{Proposition}
\theoremstyle{definition}
\newtheorem{defn}[theorem]{Definition}
\newtheorem{cor}[theorem]{Corollary}
\theoremstyle{remark}
\newtheorem{remark}[theorem]{Remark}
\global\long\def\KERNELSPACE{\mathcal{W}}
\title{Two remarks on graph norms}
\author[F. Garbe]{Frederik Garbe}
\author[J. Hladk\'y]{Jan Hladk\'y}
\address{(FG,JH) Institute of Mathematics of the Czech Academy of Sciences, \v{Z}itn\'a 25, 115~67 Prague, Czechia. Supported by GA\v{C}R project 18-01472Y. With institutional
	support RVO:67985840}
\author[J. Lee]{Joonkyung Lee}
\address{(JL) Fachbereich Mathematik, Universit\"at Hamburg, Germany. Supported by ERC Consolidator Grant PEPCo 724903.}
\begin{document}
\maketitle
\begin{abstract}
    For a graph $H$, its homomorphism density in graphs naturally extends to the space of two-variable symmetric functions $W$ in $L^p$, $p\geq e(H)$, denoted by~$t(H,W)$. One may then define corresponding functionals~$\|W\|_{H}:=|t(H,W)|^{1/e(H)}$ and $\|W\|_{r(H)}:=t(H,|W|)^{1/e(H)}$ and say that $H$ is (semi-)norming if $\|.\|_{H}$ is a (semi-)norm and that $H$ is weakly norming if $\|.\|_{r(H)}$ is a norm.
     
    We obtain two results that contribute to the theory of (weakly) norming graphs. 
    Firstly, answering a question of Hatami, who estimated the modulus of convexity and smoothness of $\|.\|_{H}$, we prove that $\|.\|_{r(H)}$ is not uniformly convex nor uniformly smooth, provided that $H$ is weakly norming.
    Secondly, we prove that every graph $H$ without isolated vertices is (weakly) norming if and only if each component is an isomorphic copy of a (weakly) norming graph.
    This strong factorisation result allows us to assume connectivity of $H$ when studying graph norms.
    In particular, we correct an error in the original statement of the aforementioned theorem by Hatami. 
\end{abstract}
\section{Introduction}
One of the cornerstones of the theory of quasirandomness, due to Chung--Graham--Wilson~\cite{Chung1989} and to Thomason~\cite{Thomason1987}, is that a graph is quasirandom if and only if it admits a random-like count for any even cycle. A modern interpretation of this phenomenon is that the even cycle counts are essentially equivalent to the Schatten--von Neumann norms on the space of two variable symmetric functions, which are the natural limit object of large dense graphs. 
Indeed, Lov\'asz~\cite{LovaszHom} asked the natural question whether other graph counts can also induce a similar norm, which motivated Hatami's pioneering work~\cite{Hat:Siderenko} in the area. Since then, graph norms have been an important concept in the theory of graph limits and received considerable attention. For instance, Conlon and the third author~\cite{ConLeeNorm} obtained a large class of graph norms, Kr\'a\v{l}, Martins, Pach, and Wrochna~\cite{KMPW:StepSidorenko} proved that edge-transitive non-norming graphs exist, and very recently, the first author with Dole\v{z}al, Greb\'ik, Rocha, and Rozho\v{n}~\cite{DGHRR:Parameters} linked graph norms to the so-called step Sidorenko property.

The current note contributes further to this emerging theory of graph norms. We recall the basic definitions given in Hatami's work~\cite{Hat:Siderenko} with slight modifications taken from~\cite{Lovasz2012}. 
Let~$\Omega$ be an arbitrary standard Borel space with an atomless probability measure $\nu$. Whenever we consider a subset of $\Omega$, we tacitly assume that it is measurable. We denote by $\KERNELSPACE$ the linear space of all bounded symmetric measurable functions $W:\Omega^2\rightarrow\mathbb R$. Also let $\KERNELSPACE_{\ge 0}\subseteq\KERNELSPACE$ be the set of non-negative functions in $\KERNELSPACE$. Recall that functions in $\KERNELSPACE_{\ge 0}$ that are bounded above by~1 are called \emph{graphons}, and arise as limits of graph sequences~\cite{LovaszSzegedy}.

Let $H$ be a graph on the vertex set $\{v_1,\ldots,v_n\}$. Given a symmetric measurable real-valued function $W$ on $\Omega^2$, set
\begin{equation}\label{eq:defden}
t(H,W):=\int_{x_{1}\in\Omega}\ldots\int_{x_{n}\in\Omega}\prod_{\{v_{i},v_{j}\}\in E(H)}W(x_{i},x_{j})~d\nu^{\otimes n}\;.
\end{equation}
Let $\mathcal{W}_H$ (resp. $\mathcal{W}_{r(H)}$) be the set of those symmetric measurable functions $W:\Omega^2\rightarrow\mathbb R$ for which $|t(H,W)|$ (resp. $t(H,|W|)$) is defined and is finite. Obviously, $\mathcal{W}_H$ is a subspace of $\mathcal{W}_{r(H)}$, and H\"older's inequality immediately proves that $L^p(\Omega^2)$ is contained in~$\mathcal{W}_{r(H)}$ whenever $p\geq e(H)$.

We then say that $H$ is \emph{(semi-)norming} if $\|\cdot\|_H:=|t(H,\cdot)|^{1/e(H)}$ is a (semi-)norm on~$\mathcal{W}_H$. Likewise, we say that $H$ is \emph{weakly norming} if $\|\cdot\|_{r(H)}:=t(H,|\cdot|)^{1/e(H)}$ is a norm on~$\mathcal{W}_{r(H)}$. Since $\KERNELSPACE$ is a dense subset of the Banach space\footnote{By the topological equivalence between the cut norm and graph norms (see, for instance, Section~5.2 in~\cite{ConLeeNorm}) and compactness of $\mathcal{W}$ under the cut norm, $\|\cdot\|_{r(H)}$ and $\|\cdot\|_H$ also define Banach spaces.} $(\mathcal{W}_H,\|\cdot\|_H)$, this definition does not depend on whether we work in the Banach space $(\KERNELSPACE,\|\cdot\|_H)$ or $(\mathcal{W}_H,\|\cdot\|_H)$. Analogously, in the definition of weakly norming property, $\mathcal{W}_{r(H)}$ can be replaced by $\KERNELSPACE$.

\medskip

In what follows, we shall give short proofs of two results concerning (weakly) norming graphs. Firstly, we study basic geometric properties of the space~$(\mathcal{W}_{r(H)},\|\cdot\|_{r(H)})$.
The definitions of uniform smoothness and uniform convexity will be precisely given in the next section.
\begin{theorem}\label{thm:uniform}
Let $H$ be a weakly norming graph. 
Then the normed space $(\mathcal{W}_{r(H)},\|\cdot\|_{r(H)})$ is not uniformly smooth nor uniformly convex.
\end{theorem}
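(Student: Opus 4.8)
The plan is to treat uniform smoothness and uniform convexity separately, exhibiting in each case an explicit witnessing family built from graphons; all kernels involved will be bounded, so it is immaterial whether one argues in $\mathcal W$ or in $\mathcal{W}_{r(H)}$.

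\emph{Failure of uniform smoothness.} I would exploit that $\|\cdot\|_{r(H)}$ has a corner over any kernel vanishing on a set of positive measure. Fix $B\subseteq\Omega$ with $\nu(B)=\tfrac12$, and set $W_0:=\mathds{1}-\mathds{1}_{B\times B}$ and $U_0:=\mathds{1}_{B\times B}$. Since $W_0$ vanishes exactly on $B\times B$ and $U_0$ is supported there, for every $\tau\in(0,1)$ the kernels $W_0+\tau U_0$ and $W_0-\tau U_0$ have the \emph{same} absolute value, namely the graphon $Z_\tau$ equal to $1$ off $B\times B$ and to $\tau$ on $B\times B$; hence $\|W_0\pm\tau U_0\|_{r(H)}=t(H,Z_\tau)^{1/e(H)}$. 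Expanding $t(H,Z_\tau)$ by multilinearity in the edges gives $t(H,Z_\tau)=2^{-v(H)}\sum_{T\subseteq V(H)}\tau^{\,e(H[T])}=2^{-v(H)}\bigl(i(H)+e'(H)\,\tau+O(\tau^2)\bigr)$, where $i(H)$ counts the independent sets of $H$ and $e'(H)$ counts the subsets $T$ with $H[T]$ a single edge; the point is that $e'(H)\geq e(H)\geq 1$, so the coefficient of $\tau$ does not vanish. Taking $(1/e(H))$-th powers and normalising $W_0$ to a unit vector $x$ and $\tau U_0$ to a vector $y$ of norm $\asymp\tau$, one reads off $\|x+y\|_{r(H)}+\|x-y\|_{r(H)}-2\gtrsim\tau$, so the modulus of smoothness is not $o(\tau)$. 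The only thing to verify is the claimed leading term of $t(H,Z_\tau)$, which is immediate.

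\emph{Failure of uniform convexity.} Here a local perturbation cannot help: disjoint supports give $\ell^{e(H)}$-type behaviour and sign flips self-cancel, and in both cases a long midpoint forces the two endpoints together. I would instead use quasirandomness to build two kernels with \emph{disjoint} supports that nonetheless combine efficiently. Let $G_k$ be the Paley graph on $q_k$ vertices with $q_k\equiv 1\pmod 4$ prime and $q_k\to\infty$ (any self-complementary quasirandom sequence of density $\to\tfrac12$ will do), let $f_k,\bar f_k$ be the graphons of $G_k$ and of its complement — isomorphic to one another, with disjoint supports — and put $x_k:=2f_k$, $y_k:=2\bar f_k$. Quasirandomness of $G_k$ yields $t(H,f_k)\to(1/2)^{e(H)}$ for every $H$, so $\|x_k\|_{r(H)},\|y_k\|_{r(H)}\to 1$; moreover $f_k+\bar f_k\to\mathds{1}$ in cut norm (only the diagonal blocks, of total measure $1/q_k$, are missing), so by the counting lemma $t(H,f_k+\bar f_k)\to t(H,\mathds{1})=1$, whence $\|x_k+y_k\|_{r(H)}=2\,t(H,f_k+\bar f_k)^{1/e(H)}\to 2$; and since $f_k,\bar f_k$ have disjoint supports, $|x_k-y_k|=2(f_k+\bar f_k)$, so $\|x_k-y_k\|_{r(H)}=\|x_k+y_k\|_{r(H)}\to 2$ as well. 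Dividing $x_k,y_k$ by their norms (which tend to $1$) produces unit vectors whose difference has norm $\to 2$ while their midpoint has norm $\to 1$; this already shows $\delta(1)=0$, so the space is not uniformly convex.

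I expect the convexity half to be the genuine obstacle. For smoothness the mechanism is transparent — the absolute value creates a corner over any zero set and pushing into that set gives a first-order defect — whereas for convexity the obvious attempts all run into the tension that a nearly full-length midpoint seems to force the two unit vectors to coincide. What breaks the deadlock is that $2f_k$ and $2\bar f_k$ have disjoint supports yet \emph{add up} to $2\cdot\mathds{1}$, the graphon maximising $t(H,\cdot)$, so their midpoint is asymptotically as long as the endpoints while staying at distance $\approx 2$ from them. Identifying this "two complementary quasirandom halves" construction is the crux; after that, everything reduces to standard facts about quasirandom graphs and the counting lemma.
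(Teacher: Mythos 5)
Your proof is correct, but it takes a genuinely different route from the paper. The paper treats both failures with a single probabilistic device: a random block graphon $\mathbb{U}(n,\mathcal{D})$ with i.i.d.\ block values, four tailored distributions $\mathcal{D}_1,\dots,\mathcal{D}_4$, and a concentration statement (Proposition~\ref{prop:densitiesU}); the key point there, as in your argument, is that the absolute value inside $\|\cdot\|_{r(H)}$ lets one control $\|U_1-U_2\|$ and $\|U_1-\varepsilon U_2\|$ by recognising $|U_1-U_2|$ and $|U_1-\varepsilon U_2|$ as further instances of the same random model. Your argument is deterministic on both counts. For smoothness, the two-block kernel $W_0=\mathds{1}-\mathds{1}_{B\times B}$ with perturbation $\tau\mathds{1}_{B\times B}$ works exactly as you say: $|W_0\pm\tau U_0|=Z_\tau$, the expansion $t(H,Z_\tau)=2^{-v(H)}\sum_T\tau^{e(H[T])}$ has a strictly positive linear coefficient and a positive constant term, so $\|x+y\|+\|x-y\|-2\ge c_H\|y\|$ along $\|y\|=\varepsilon\to 0$; this contradicts $\lim_{\varepsilon\searrow 0}\mathfrak{s}(\varepsilon)/\varepsilon=0$, though with an $H$-dependent constant rather than the paper's clean bound $\mathfrak{s}_{r(H)}(\varepsilon)\ge\varepsilon/2$ valid for every $\varepsilon\in(0,1)$. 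For convexity, the complementary-pair idea (disjoint supports, sum close to $\mathds{1}$, so that $|x_k-y_k|=x_k+y_k$) is sound, and in fact yields slightly more than the paper: your witnesses have distance tending to $2$, so $\mathfrak{d}_{r(H)}(\varepsilon)=0$ for every $\varepsilon\in(0,2)$, whereas the paper's independent copies only reach $\varepsilon\in(0,1)$. Two small remarks: the Paley-graph machinery is heavier than necessary --- any $\{0,1\}$-valued $W$ with $t(H,W)$ and $t(H,\mathds{1}-W)$ close to $2^{-e(H)}$ does the job, e.g.\ the paper's own $U_1\sim\mathbb{U}(n,\mathcal{D}_1)$ paired with $\mathds{1}-U_1$, and self-complementarity is never used --- and in the normalisation step you should (as you implicitly do) pass the estimates from $x_k,y_k$ to $x_k/\|x_k\|,y_k/\|y_k\|$ via the triangle inequality, which is legitimate precisely because $H$ is weakly norming.
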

This answers a question of Hatami, who proved that $(\mathcal{W},\|\cdot\|_{H})$ \emph{is} uniformly smooth and uniformly convex whenever $H$ is seminorming and asked for a counterpart of his theorem for weakly norming graphs.

Theorem~\ref{thm:uniform} not only answers a natural question arising from a functional-analytic perspective, but is also meaningful in the theory of quasirandomness. In~\cite{DGHRR:Parameters}, Hatami's theorem about uniform convexity and smoothness (see Theorem~\ref{thm:HatModuli} for a precise statement) is the key ingredient in proving that every norming graph has the `step forcing property'.
By inspecting the proof in~\cite{DGHRR:Parameters}, one may see that the same conclusion for weakly norming graphs $H$ (except forests) could also be obtained if $\|\cdot\|_{r(H)}$ defined a uniformly convex space. However, Theorem~\ref{thm:uniform} proves that such a modification is impossible.

\medskip

Secondly, we prove a strong `factorisation' result for disconnected weakly norming graphs.
\begin{theorem}\label{thm:factor}
A graph $H$ is weakly norming if and only if all its non-singleton connected components are isomorphic and weakly norming. The same statement with weakly norming replaced by either seminorming or norming also holds.
\end{theorem}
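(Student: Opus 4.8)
The plan is to reduce both directions to the multiplicativity of homomorphism densities over disjoint unions. Deleting or adjoining isolated vertices changes neither $t(H,\cdot)$ nor $e(H)$, hence none of the functionals involved, so we may assume $H$ has no isolated vertices and write $H=\bigsqcup_{i=1}^{k}m_iF_i$ with $F_1,\dots,F_k$ pairwise non-isomorphic connected graphs, each with an edge, and $m_i\ge1$. For ``$\Leftarrow$'': if $k=1$ then $t(H,W)=t(F_1,W)^{m_1}$ and $e(H)=m_1e(F_1)$, so $\|\cdot\|_{r(H)}=\|\cdot\|_{r(F_1)}$ and $\|\cdot\|_{H}=\|\cdot\|_{F_1}$, and the property transfers. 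The same identity shows that for ``$\Rightarrow$'' it suffices to prove that if $H$ is weakly norming (respectively semi\-norming, norming) then $k=1$; that $F_1$ is then weakly norming (etc.) is immediate. Since a nonnegative $W$ has $t(F_i,W)\ge0$ for each $i$, for such $W$ we have $\|W\|_{H}=\|W\|_{r(H)}$, so all three variants can be handled simultaneously by working with $\|\cdot\|_{r(H)}$ and nonnegative test functions.

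The engine is a \emph{block-additivity} observation: if $\Omega=\Omega_1\dcup\Omega_2$ is measurable and $U,V\in\mathcal W$ have $\operatorname{supp}U\subseteq\Omega_1^2$, $\operatorname{supp}V\subseteq\Omega_2^2$, then $t(C,U+V)=t(C,U)+t(C,V)$ for every connected $C$, because any contributing $\phi\colon V(C)\to\Omega$ maps $V(C)$ into a single block. Hence, for such $U,V$,
\[
\|U+V\|_{r(H)}^{e(H)}=\prod_{i=1}^{k}\bigl(t(F_i,U)+t(F_i,V)\bigr)^{m_i},\qquad \|U\|_{r(H)}^{e(H)}=\prod_{i=1}^{k}t(F_i,U)^{m_i},
\]
and likewise for $V$; so if $\|\cdot\|_{r(H)}$ is a (semi)norm, the triangle inequality must hold for every block-disjoint pair built from prescribed pieces of $\Omega$. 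I would exploit this with three families of test functions. First, \emph{diagonal step functions}: on $n$ disjoint sets of equal measure $\beta$ set $W_{\underline\lambda}=\sum_j\lambda_j\mathds 1_{B_j\times B_j}$, so that $\|W_{\underline\lambda}\|_{r(F_i)}=\beta^{v(F_i)/e(F_i)}\|\underline\lambda\|_{\ell^{e(F_i)}}$; cancelling the common power of $\beta$, the triangle inequality forces the weighted geometric mean $\prod_i\|\cdot\|_{\ell^{e(F_i)}}^{\,m_ie(F_i)/e(H)}$ to be subadditive on finite sequences. Testing this on $\underline\lambda=n^{1/\min_ie(F_i)}\,\mathbf e_1$ against $\underline\mu=\mathbf e_1+\dots+\mathbf e_n$ and letting $n\to\infty$, the two sides grow at the same polynomial rate but the left-hand side carries a surplus constant factor strictly exceeding $1$ (coming from the components of minimum edge-number), a contradiction unless all $e(F_i)$ are equal. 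An analogous test with square indicators of \emph{unequal} size, $U=\mathds 1_{S\times S}$ against $V=\mathds 1_{\bigcup_{j\le n}T_j\times T_j}$ with $\nu(S)=n^{1/\min_iv(F_i)}\nu(T_j)$ and $n\to\infty$, forces all $v(F_i)$ to be equal. (In both computations the sets are fitted into $\Omega$ by fixing $n$ first and then shrinking the common measure scale, with respect to which the inequalities are homogeneous.)

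So we may assume all $F_i$ have the same order $v$ and size $e$, and we suppose $F_1\not\cong F_2$. By Lov\'asz's theorem on homomorphism numbers, fix a finite graph $G$ with $\hom(F_i,G)>0$ for every $i$ and $\hom(F_1,G)\ne\hom(F_2,G)$, e.g.\ a large clique disjointly joined to a graph distinguishing $F_1$ and $F_2$. Let $W_G$ be the blow-up graphon of $G$, put $h_i:=t(F_i,W_G)=\hom(F_i,G)/|V(G)|^{v}>0$ and $g:=\prod_ih_i^{m_i/M}$ with $M:=\sum_im_i$, and set $\eta_i:=h_i/g$; then the $\eta_i$ are positive, not all equal, and $\prod_i\eta_i^{m_i/M}=1$, so $\max_i\eta_i>1$. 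Using the tensor power $W_{G^{\otimes m}}$ (the blow-up of $G^{\otimes m}$, for which $t(F_i,W_{G^{\otimes m}})=h_i^{m}$) and the functions $U=\mu_1 W_{G^{\otimes m}}$ supported in $\Omega_1^2$ and $V=\mu_2 W_G$ supported in $\Omega_2^2$, with $\mu_1,\mu_2>0$ tuned so that $\|U\|_{r(H)}=\|V\|_{r(H)}=1$, block-additivity together with all $e(F_i)=e$ gives $t(F_i,U)=\eta_i^{m}$, $t(F_i,V)=\eta_i$, whence
\[
\|U+V\|_{r(H)}^{e}=\prod_{i}\bigl(\eta_i^{\,m}+\eta_i\bigr)^{m_i/M}=\prod_i\bigl(\eta_i^{\,m-1}+1\bigr)^{m_i/M}\longrightarrow\infty\quad\text{as }m\to\infty,
\]
contradicting $\|U+V\|_{r(H)}\le\|U\|_{r(H)}+\|V\|_{r(H)}=2$. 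Hence $k=1$, which finishes the proof.

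The step I expect to be the crux is this last one: the reductions on $e(F_i)$ and on $v(F_i)$ are forced by transparent step-function computations, but excluding non-isomorphic components of the same order and size genuinely requires ``graph-like'' test functions, and therefore the full strength of homomorphism-distinguishability. A minor recurring point, handled throughout by scale-homogeneity, is checking that each test function actually lives inside a fixed atomless probability space.
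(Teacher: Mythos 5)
Your proof is correct, and although its skeleton parallels the paper's (force the components to share numerical parameters, then use Lov\'asz-type homomorphism distinguishability to force isomorphism, then transfer the norm to the component via $t(H,W)=t(F,W)^{k}$), the machinery is genuinely different. The paper runs everything through Hatami's H\"older-type characterisation (Lemma~\ref{lem:Holder}): an average-degree lemma via a square-indicator graphon, equal edge numbers via the functions $W_{\gamma,\mathbf{a}}$ of Definition~\ref{def:specialgraphon} and the strict gap between $\ell^p$ and $\ell^q$ norms, and the isomorphism step via the domination inequality~\eqref{eq:dominate} applied to a distinguishing graphon from Lov\'asz's Theorem~5.29. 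You use only the triangle inequality for $\|\cdot\|_{r(H)}$ (equivalently $\|\cdot\|_{H}$ on nonnegative functions, which does legitimately cover all three variants at once): equal-measure diagonal step functions give subadditivity of the weighted geometric mean of $\ell^{e(F_i)}$-norms, whose asymptotic violation forces equal edge numbers; unequal square indicators force equal vertex numbers; and the final step replaces~\eqref{eq:dominate} by a tensor-power amplification $G^{\otimes m}$ of a finite distinguishing target, making $\|U+V\|_{r(H)}$ blow up while $\|U\|_{r(H)}=\|V\|_{r(H)}=1$. I checked the asymptotics in all three steps and they work; in particular the ``common power of $\beta$'' is legitimate even before any average-degree information, since the per-component factors $\beta^{v(F_i)/e(F_i)}$ combine to the same exponent $v(H)/e(H)$ in every term of the triangle inequality, and in that step the two test functions sit on the same blocks, so only the identity $W_{\underline{\lambda}}+W_{\underline{\mu}}=W_{\underline{\lambda}+\underline{\mu}}$ is needed rather than block-disjointness. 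What your route buys is independence from Lemma~\ref{lem:Holder} --- only the norm axioms plus Lov\'asz distinguishability are used, and the intermediate average-degree step disappears; what the paper's route buys is brevity given Lemma~\ref{lem:Holder} and reusable intermediate facts (Lemma~\ref{lem:weaklynormingAvgDeg}, inequality~\eqref{eq:dominate}). One small point to tidy: the recipe ``a large clique disjointly joined to a distinguishing graph'' can accidentally equalise $\hom(F_1,\cdot)$ and $\hom(F_2,\cdot)$; either note that in that case a clique alone distinguishes them (and has positive counts), or cite the graphon version as the paper does and approximate by a finite graph.
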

The `if' direction is obvious, since $|t(H,W)|^{1/e(H)}=|t(H',W)|^{1/e(H')}$ whenever $W\in\KERNELSPACE$ and $H$ is a vertex-disjoint union of copies of $H'$ and an arbitrary number of isolated vertices, but the converse is non-trivial. 

Theorem~\ref{thm:factor} corrects a number of errors that assume connectivity of graphs without stating it, which in fact appeared in multiple papers on graph norms including Hatami's work~\cite{Hat:Siderenko}.
We also remark that for Sidorenko's conjecture, a major open problem in extremal combinatorics, even a weak factorisation result --- such as each component of a graph satisfying the conjecture again satisfies it --- is unknown,
even though weakly norming graphs satisfy the conjecture.
In fact, Conlon and the third author~\cite[Corollary~1.3]{ConLee:Sidorenko} proved that the weak factorisation result, if it exists, implies the full conjecture.

\medskip

\section{Moduli of convexity and smoothness}\label{sec:Moduli}
We begin by recalling the definitions of moduli of convexity and moduli of smoothness of a normed space.
\begin{defn}
Let $\left(X,\left\| \cdot\right\| \right)$ be a normed
space. The \emph{modulus of convexity of} $X$ is a function
$\mathfrak{d}_{X}:(0,2]\rightarrow\mathbb{R}$ defined by 
\begin{align}\label{eq:defmodconv}
\mathfrak{d}_{X}(\varepsilon) & :=\inf\left\{ 1-\left\| \frac{x+y}{2}\right\| \::\:x,y\in X,\left\| x-y\right\| \ge\varepsilon,\left\| x\right\| =\left\| y\right\| =1\right\} .
\end{align}
The \emph{modulus of smoothness of} $X$ is a function $\mathfrak{s}_{X}:(0,\infty)\rightarrow\mathbb{R}$
defined by 
\begin{align}\label{eq:defmodsmooth}
\mathfrak{s}_{X}(\varepsilon) & :=\sup\left\{ \frac{1}{2}\left(\left\| x+y\right\| +\left\| x-y\right\| -2\right)\::\:x,y\in X,\left\| x\right\| =1,\left\| y\right\| =\varepsilon\right\} .
\end{align}

The normed space $(X,\|\cdot\|)$ is \emph{uniformly convex}  if $\mathfrak{d}_{X}(\varepsilon)>0$
for each $\varepsilon>0$ and is \emph{uniformly smooth}
if $\lim_{\varepsilon\searrow0}\frac{\mathfrak{s}_{X}(\varepsilon)}{\varepsilon}=0$. For convenience, we write $\mathfrak{d}_{H}$, $\mathfrak{s}_{H}$, $\mathfrak{d}_{r(H)}$ and $\mathfrak{s}_{r(H)}$ instead of $\mathfrak{d}_{\mathcal{W}_H}$, $\mathfrak{s}_{\mathcal{W}_H}$, $\mathfrak{d}_{\mathcal{W}_{r(H)}}$ and $\mathfrak{s}_{\mathcal{W}_{r(H)}}$, respectively. 
\end{defn}

Hatami~\cite{Hat:Siderenko} determined $\mathfrak{d}_{H}$ and $\mathfrak{s}_{H}$ for connected norming graphs $H$ up to a multiplicative constant by relating them to the moduli of convexity and of smoothness of $\ell^p$-spaces, which are well-understood.
\begin{thm}[Theorem 2.16 in~\cite{Hat:Siderenko}]\label{thm:HatModuli}
	For each $m\in \mathbb{N}$, there exist constants $C_m,C'_m>0$ such that the following holds: let $H$ be a connected seminorming graph with $m$ edges. Then the Banach space $(\mathcal{W}_H,\|\cdot\|_H)$ satisfies $C_m\cdot \mathfrak{d}_{\ell^m}\le \mathfrak{d}_{H}\le \mathfrak{d}_{\ell^m}$ and $\mathfrak{s}_{\ell^m}\le \mathfrak{s}_{H}\le C'_m\cdot \mathfrak{s}_{\ell^m}$.
\end{thm}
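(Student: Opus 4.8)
The plan is to prove each of the two double inequalities by separating its ``easy'' and ``hard'' halves. The easy halves $\mathfrak{d}_{H}\le\mathfrak{d}_{\ell^{m}}$ and $\mathfrak{s}_{\ell^{m}}\le\mathfrak{s}_{H}$ will follow from exhibiting an isometric copy of $\ell^{m}$ inside $(\mathcal{W}_{H},\|\cdot\|_{H})$, since a closed subspace has modulus of convexity no smaller, and modulus of smoothness no larger, than the ambient space. The hard halves $C_{m}\mathfrak{d}_{\ell^{m}}\le\mathfrak{d}_{H}$ and $\mathfrak{s}_{H}\le C'_{m}\mathfrak{s}_{\ell^{m}}$ amount to saying that $\|\cdot\|_{H}$ has modulus of convexity of power type $m$ and modulus of smoothness of power type $2$, precisely as $\ell^{m}$ does; these we will obtain from Clarkson-type inequalities for $\|\cdot\|_{H}$, derived from the multilinear structure of $t(H,\cdot)$, combined with the classical power-type estimates for the $\ell^{m}$-moduli. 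Throughout we assume $m\ge2$; recall that then $m$ is even and $t(H,W)\ge0$ for all $W\in\mathcal{W}_{H}$ (see~\cite{Hat:Siderenko}), so $t(H,W)=\|W\|_{H}^{m}$.

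For the embedding, use atomlessness of $\nu$ to choose pairwise disjoint positive-measure sets $\Omega_{1},\Omega_{2},\dots\subseteq\Omega$ and put $W_{i}:=\nu(\Omega_{i})^{-v(H)/m}\mathds{1}_{\Omega_{i}\times\Omega_{i}}$, so $\|W_{i}\|_{H}=1$. Because $H$ is connected, any homomorphic copy of $H$ contributing to $t\bigl(H,\sum_{i}a_{i}W_{i}\bigr)$ lies in a single block $\Omega_{i}\times\Omega_{i}$, hence $t\bigl(H,\sum_{i}a_{i}W_{i}\bigr)=\sum_{i}a_{i}^{m}t(H,W_{i})=\sum_{i}|a_{i}|^{m}$ for every $(a_{i})\in\ell^{m}$ (here $m$ even is used). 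Thus $(a_{i})\mapsto\sum_{i}a_{i}W_{i}$ is a linear isometry of $\ell^{m}$ onto a (necessarily closed) subspace of $(\mathcal{W}_{H},\|\cdot\|_{H})$, yielding the easy halves.

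For the hard halves, let $t(H;W_{1},\dots,W_{m})$ be the symmetric $m$-linear form given by placing a possibly different kernel on each edge of $H$ and symmetrising over orderings, so that $t(H;W,\dots,W)=t(H,W)$ and, by multilinearity,
\[
t(H,U+V)=\sum_{k=0}^{m}\binom{m}{k}\,t\bigl(H;U^{(m-k)},V^{(k)}\bigr),
\]
where $U^{(j)}$ denotes $U$ repeated $j$ times. The crucial input is the Hölder-type inequality $|t(H;W_{1},\dots,W_{m})|\le\prod_{i=1}^{m}\|W_{i}\|_{H}$ valid for seminorming $H$, which we take from~\cite{Hat:Siderenko}. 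Adding the expansions of $t(H,U+V)$ and $t(H,U-V)$ kills the odd-index terms and gives
\begin{multline*}
\|U+V\|_{H}^{m}+\|U-V\|_{H}^{m}
=2\sum_{k\text{ even}}\binom{m}{k}\,t\bigl(H;U^{(m-k)},V^{(k)}\bigr)\\
\le(\|U\|_{H}+\|V\|_{H})^{m}+(\|U\|_{H}-\|V\|_{H})^{m}.
\end{multline*}
Taking $\|x\|_{H}=\|y\|_{H}=1$ with $\|x-y\|_{H}\ge\varepsilon$ forces $\|x+y\|_{H}^{m}\le2^{m}-\varepsilon^{m}$, whence $1-\bigl\|\tfrac{x+y}{2}\bigr\|_{H}\ge1-\bigl(1-(\varepsilon/2)^{m}\bigr)^{1/m}\ge(\varepsilon/2)^{m}/m$; so $\mathfrak{d}_{H}(\varepsilon)\ge c_{m}\varepsilon^{m}$, which with the classical bound $\mathfrak{d}_{\ell^{m}}(\varepsilon)\le K_{m}\varepsilon^{m}$ gives $C_{m}\mathfrak{d}_{\ell^{m}}\le\mathfrak{d}_{H}$. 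Likewise, for $\|x\|_{H}=1$ and $\|y\|_{H}=\tau\le1$ the same display bounds $\|x+y\|_{H}^{m}+\|x-y\|_{H}^{m}$ by $2+O_{m}(\tau^{2})$, and the power-mean inequality converts this into $\tfrac{1}{2}\bigl(\|x+y\|_{H}+\|x-y\|_{H}-2\bigr)\le C''_{m}\tau^{2}$; since $\mathfrak{s}_{\ell^{m}}(\tau)\asymp_{m}\tau^{2}$ for $\tau\le1$ (and $\mathfrak{s}_{X}(\tau)\le\tau$ together with the growth of $\mathfrak{s}_{\ell^{m}}$ for $\tau\ge1$ handles large $\tau$), this yields $\mathfrak{s}_{H}\le C'_{m}\mathfrak{s}_{\ell^{m}}$.

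The main obstacle is the multilinear Hölder inequality for seminorming graphs used above. The naive route, via the polarisation identity, only yields $|t(H;W_{1},\dots,W_{m})|\le\frac{2^{m}-1}{m!}\bigl(\sum_{i}\|W_{i}\|_{H}\bigr)^{m}$, which is far too weak for the Clarkson step --- the $k=2$ mixed term alone would then merely be bounded by a constant instead of $O(\|U\|_{H}^{m-2}\|V\|_{H}^{2})$ --- so the sharp product form, which really exploits the Cauchy--Schwarz structure built into the seminorming condition, is essential. Granting that inequality, the remaining points --- the precise power-type behaviour of $\mathfrak{d}_{\ell^{m}}$ and $\mathfrak{s}_{\ell^{m}}$, and the bookkeeping of the comparison constants over $(0,2]$ and $(0,\infty)$ --- are routine.
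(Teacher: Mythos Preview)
Your proof is correct and matches the approach the paper sketches, which is Hatami's original. The $\ell^m$-embedding via block-supported kernels on disjoint sets is precisely the construction around Definition~\ref{def:specialgraphon} (with an immaterial change in the block measures), and your Clarkson-type bound from the multilinear expansion together with the H\"older inequality of Lemma~\ref{lem:Holder}(b) is exactly Hatami's route to the remaining two inequalities, which the paper merely cites.
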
 
Since for each $p\in(1,+\infty)$, it is well-known that the $\ell^p$-space is uniformly convex and uniformly smooth, one obtains the following.
\begin{cor}\label{cor:HatModuli}
	Let $H$ be a connected seminorming graph. Then the Banach space $(\mathcal{W}_H,\|\cdot\|_H)$ is uniformly convex and uniformly smooth.
\end{cor}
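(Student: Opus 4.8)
The plan is to obtain this as a direct consequence of Theorem~\ref{thm:HatModuli}, combined with the classical fact (Clarkson's inequalities) that $\ell^p$ is uniformly convex and uniformly smooth for every real $p\in(1,\infty)$. Set $m:=e(H)$; a seminorming graph has at least one edge, and in the degenerate case $m=1$ we have $H=K_2$, so $\|\cdot\|_{K_2}=|t(K_2,\cdot)|$ is a seminorm whose associated Banach space is isometric to $\mathbb{R}$ and hence trivially uniformly convex and uniformly smooth. So I may assume $m\ge 2$.

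First I would record two elementary facts. From Clarkson's (or Hanner's) inequalities, $\mathfrak{d}_{\ell^m}(\varepsilon)>0$ for every $\varepsilon\in(0,2]$ and $\lim_{\varepsilon\searrow 0}\mathfrak{s}_{\ell^m}(\varepsilon)/\varepsilon=0$. Secondly, for any normed space $(X,\|\cdot\|)$ the triangle inequality gives $\|x+y\|+\|x-y\|\ge\|2x\|=2\|x\|$, so that $\mathfrak{s}_X(\varepsilon)\ge 0$ for every $\varepsilon>0$; this nonnegativity is what allows the one-sided bound in Theorem~\ref{thm:HatModuli} to pin down the limit.

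Now apply Theorem~\ref{thm:HatModuli} to the connected seminorming graph $H$ with $m$ edges, which supplies constants $C_m,C'_m>0$ satisfying $C_m\cdot\mathfrak{d}_{\ell^m}\le\mathfrak{d}_{H}$ and $\mathfrak{s}_{H}\le C'_m\cdot\mathfrak{s}_{\ell^m}$. For uniform convexity, the first inequality gives $\mathfrak{d}_{H}(\varepsilon)\ge C_m\,\mathfrak{d}_{\ell^m}(\varepsilon)>0$ for every $\varepsilon\in(0,2]$, which is precisely the definition. For uniform smoothness, combining the second inequality with $\mathfrak{s}_{H}\ge 0$ yields $0\le\mathfrak{s}_{H}(\varepsilon)/\varepsilon\le C'_m\,\mathfrak{s}_{\ell^m}(\varepsilon)/\varepsilon$ for all $\varepsilon>0$, and letting $\varepsilon\searrow 0$ forces $\mathfrak{s}_{H}(\varepsilon)/\varepsilon\to 0$. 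Hence $(\mathcal{W}_H,\|\cdot\|_H)$ is both uniformly convex and uniformly smooth.

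There is essentially no obstacle here, since all the analytic content is already packaged into the quoted theorem; the only points deserving a line of care are the observation that $\mathfrak{s}_{H}\ge 0$ (so that the upper estimate alone suffices) and the harmless case $m=1$.
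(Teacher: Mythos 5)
Your proposal is correct and follows the same route as the paper, which likewise obtains the corollary directly from Theorem~\ref{thm:HatModuli} together with the classical fact that $\ell^p$ is uniformly convex and uniformly smooth for $p\in(1,\infty)$. The extra care you take (nonnegativity of $\mathfrak{s}_H$ and the trivial case $m=1$) is harmless and only makes explicit what the paper leaves implicit.
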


The connectivity of $H$ in Theorem~\ref{thm:HatModuli} was in fact neglected in the original statement in~\cite{Hat:Siderenko}, but it is certainly necessary. For example, by taking a disjoint union of two isomorphic norming graphs with $m/2$ edges (assume $m$ is even), one obtains another norming graph with $m$ edges that gives exactly the same norm, whose correct parameters in Theorem~\ref{thm:HatModuli} are $\mathfrak{d}_H =\Theta(\mathfrak{d}_{\ell^{m/2}})$ and $\mathfrak{s}_H =\Theta(\mathfrak{s}_{\ell^{m/2}})$.
Indeed, in Theorem~\ref{thm:ModuliCorrected} below we obtain a general statement without assuming connectivity by using Theorem~\ref{thm:factor}. But first, let us point out the subtle error in~\cite{Hatami2012} causes that the proof of Theorem~\ref{thm:HatModuli} does not work for disconnected graphs. This error lies in proving $\mathfrak{d}_{H}\leq \mathfrak{d}_{\ell^m}$ and $\mathfrak{s}_{\ell^m}\le \mathfrak{s}_{H}$ by claiming that the Banach space $(\mathcal{W}_H,\|\cdot\|_H)$ contains a subspace isomorphic to $(\ell^m,\|\cdot\|_m)$.
Here we give a full proof of the claim, which in turn reveals where the connectivity of $H$ is used.
To this end, we introduce the following notation, which will also be useful in Section~\ref{sec:disconnected}.
\begin{defn}\label{def:specialgraphon}
Let $\Omega$ be partitioned as $\Omega=\Omega_1\sqcup\Omega_2\sqcup\ldots$ with countably many parts such that $\nu(\Omega_i)=2^{-i}$ for every $i\in\mathbb N$. For each $m\in\mathbb{N}$, $\gamma>0$, and $\mathbf{a}=(a_1,a_2,\ldots)\in \ell^m$,
$W_{\gamma,\mathbf{a}}$~denotes the function satisfying $W_{\gamma,\mathbf{a}}(x,y)=2^{i\gamma}\cdot a_i$ whenever $(x,y)\in\Omega_i^2$ and $W_{\gamma,\mathbf{a}}=0$ outside $\bigcup_i \Omega_i^2$. 
\end{defn}

Suppose that $H$ is a norming graph with $n$ vertices and $m$ edges. In particular this implies that $m$ is even (see \cite[Exercise~14.8]{Lovasz2012}). The map $\mathbf{a}\mapsto W_{\frac{n}{m},\mathbf{a}}$ is linear, and thus, proving that this map preserves the respective norms is enough to conclude that the subspace spanned by $W_{\frac{n}{m},\mathbf{a}}$ is isomorphic to $\ell^m$. 
For each $\mathbf{a}=(a_1,a_2,\ldots)\in \ell^m$,
$$\|\mathbf{a}\|^m_m=\sum_i a_i^m=\sum_i \frac{1}{2^{in}}\cdot(2^{in/m}\cdot a_i)^m=t\big(H,W_{\frac{n}{m},\mathbf{a}}\big)\;.$$
Indeed, if $x_1,\ldots,x_n$ do not fall into any single $\Omega_i$, connectedness of $H$ implies that the product in~\eqref{eq:defden} evaluates to~0. Otherwise, if $(x_1,\ldots,x_n)\in \Omega_i^n$ for some $i\in\mathbb{N}$, then $\nu^{\otimes n}(\Omega_i^n)=\frac{1}{2^{in}}$ and the product in~\eqref{eq:defden} evaluates to constant~$(2^{in/m}\cdot a_i)^m$, 
which proves the last equality. This is exactly where the proof of the claim relies on $H$ being connected.

\medskip
Now, turning to weakly norming graphs, Theorem~\ref{thm:uniform} is a direct consequence of the following result.
\begin{thm}\label{thm:rHnonnconvexnonsmooth}
	Let $H$ be a weakly norming graph. Then for each $\varepsilon\in(0,1)$,
	\begin{enumerate}[label=(\alph*)]
\item\label{en:convexity} $\mathfrak{d}_{r(H)}(\varepsilon)=0$, and
\item\label{en:smoothness} $\mathfrak{s}_{r(H)}(\varepsilon)\ge\frac{1}{2}\varepsilon$.
	\end{enumerate}
\end{thm}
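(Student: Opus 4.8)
The plan is to produce, for each $\varepsilon\in(0,1)$, an explicit sequence of admissible pairs witnessing \ref{en:convexity} and \ref{en:smoothness}. First I would reduce to connected $H$: by Theorem~\ref{thm:factor} every non-singleton component of $H$ is an isomorphic copy of a weakly norming graph $H'$, and since isolated vertices do not affect $t(\cdot,W)$ and $|t(H,W)|^{1/e(H)}=|t(H',W)|^{1/e(H')}$ when $H$ is a disjoint union of copies of $H'$ and singletons, one has $\|\cdot\|_{r(H)}=\|\cdot\|_{r(H')}$; so assume $H$ connected with $n:=v(H)$ vertices and $m:=e(H)\ge1$ edges. The building block is the blow-up: partition $\Omega=\Omega_1\sqcup\dots\sqcup\Omega_q$ into sets of measure $1/q$ (possible since $\nu$ is atomless), and to a graph $G$ on $[q]$ associate $W^G\in\mathcal W$ with $W^G(x,y)=\mathbf 1[\{i,j\}\in E(G)]$ for $(x,y)\in\Omega_i\times\Omega_j$. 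Then $W^G\ge0$ and $\|W^G\|_{r(H)}^m=t(H,W^G)=\hom(H,G)/q^n$.

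Now fix a prime power $q\equiv1\pmod 4$ and let $P_q$ be the Paley graph on $[q]$: it is self-complementary and quasirandom of density $\tfrac12$, so $W^{P_q}\to\tfrac12\mathbf 1$ in the cut norm as $q\to\infty$. Together with the counting lemma this gives $\|W^{P_q}\|_{r(H)}=\|W^{\overline{P_q}}\|_{r(H)}\to\tfrac12$ and $\|W^{K_q}\|_{r(H)}^m=\hom(H,K_q)/q^n\to1$. The crucial structural point is that $W^{P_q}+W^{\overline{P_q}}=W^{K_q}$ while $W^{P_q}$ and $W^{\overline{P_q}}$ have disjoint supports (the edges of $P_q$ and of its complement partition those of $K_q$, and $W^G$ vanishes on the diagonal blocks); since both are nonnegative, $|W^{P_q}\pm\lambda W^{\overline{P_q}}|=W^{P_q}+\lambda W^{\overline{P_q}}$ for all $\lambda\in[0,1]$, so all these combinations have a common $\|\cdot\|_{r(H)}$-norm. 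For \ref{en:convexity}: put $\eta_q:=\|W^{P_q}\|_{r(H)}$ and $x_q:=\eta_q^{-1}W^{P_q}$, $y_q:=\eta_q^{-1}W^{\overline{P_q}}$; these are unit vectors with $\|x_q-y_q\|_{r(H)}=\|x_q+y_q\|_{r(H)}=\eta_q^{-1}\|W^{K_q}\|_{r(H)}\to2$, so $\|\tfrac12(x_q+y_q)\|_{r(H)}\to1$. For $q$ large $\|x_q-y_q\|_{r(H)}>\varepsilon$, hence $\mathfrak d_{r(H)}(\varepsilon)\le1-\|\tfrac12(x_q+y_q)\|_{r(H)}\to0$; since $\mathfrak d_{r(H)}\ge0$ always, this proves $\mathfrak d_{r(H)}(\varepsilon)=0$.

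For \ref{en:smoothness} I would keep $x:=x_q$ but rescale the second vector: $y:=\varepsilon y_q$, of norm $\varepsilon$. The supports are still disjoint and both functions nonnegative, so $\|x-y\|_{r(H)}=\|x+y\|_{r(H)}$. Writing $W^{P_q}+\varepsilon W^{\overline{P_q}}=(1-\varepsilon)W^{P_q}+\varepsilon W^{K_q}\to\tfrac{1+\varepsilon}{2}\mathbf 1$ in the cut norm, the counting lemma gives $\|W^{P_q}+\varepsilon W^{\overline{P_q}}\|_{r(H)}\to\tfrac{1+\varepsilon}{2}$, hence $\|x+y\|_{r(H)}=\eta_q^{-1}\|W^{P_q}+\varepsilon W^{\overline{P_q}}\|_{r(H)}\to1+\varepsilon$. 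Therefore $\tfrac12(\|x+y\|_{r(H)}+\|x-y\|_{r(H)}-2)=\|x+y\|_{r(H)}-1\to\varepsilon$, which yields $\mathfrak s_{r(H)}(\varepsilon)\ge\varepsilon\ge\tfrac12\varepsilon$.

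The step I expect to be the main obstacle is finding the right test functions. The obvious candidates — kernels supported on disjoint "block squares" $\Omega_i^2$, nested block squares, or blow-ups of a fixed complete bipartite graph with a common small part — all yield norms that only interpolate between $\ell^1$ and $\ell^m$ and never drive $\|x+y\|/\|x\|$ up to $2$ once $m>1$ (for instance they stall at $\sqrt2$ for $H=C_4$). What unblocks the argument is recognising that a density-$\tfrac12$ quasirandom graph together with its complement realises the equality case of the weakly-norming (Hölder-type) inequality $t(H,W_1+W_2)^{1/m}\le t(H,W_1)^{1/m}+t(H,W_2)^{1/m}$: one has $t(H,W^{P_q})\to2^{-m}$ whereas $t(H,W^{P_q}+W^{\overline{P_q}})=t(H,W^{K_q})\to1=2^{m}\cdot2^{-m}$, which is exactly what forces $\|x_q+y_q\|_{r(H)}\to2$.
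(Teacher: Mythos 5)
Your proof is correct, but it takes a genuinely different route from the paper's. The paper argues probabilistically: it introduces the random block graphon $\mathbb{U}(n,\mathcal{D})$, takes two \emph{independent} copies $U_1,U_2$ of $\mathbb{U}(n,\tfrac12\mathbf{1}\{0\}+\tfrac12\mathbf{1}\{1\})$, and uses the fact that $|U_1-U_2|$, $\tfrac12(U_1+U_2)$, $|U_1-\varepsilon U_2|$, $\tfrac12|U_1+\varepsilon U_2|$ are again random graphons of this type with computable mean, so concentration of subgraph densities (Proposition~\ref{prop:densitiesU}) yields all the needed norms up to $o_n(1)$. You instead use deterministic blow-ups of Paley graphs and their complements; your key structural point is disjointness of supports, which makes $|W^{P_q}\pm\lambda W^{\overline{P_q}}|=W^{P_q}+\lambda W^{\overline{P_q}}$, so only ``plus'' norms ever need to be evaluated, and these follow from quasirandomness of $P_q$ plus the counting lemma, with self-complementarity giving the exact normalisation $\|x_q\|=\|y_q\|=1$. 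Each approach buys something: the paper's needs no explicit quasirandom family and only a standard concentration statement, while your construction is deterministic and in fact gives stronger conclusions --- since $\|x_q-y_q\|\to 2$ you get $\mathfrak{d}_{r(H)}(\varepsilon)=0$ for all $\varepsilon\in(0,2)$, and your smoothness bound is $\mathfrak{s}_{r(H)}(\varepsilon)\ge\varepsilon$, which matches the trivial upper bound $\mathfrak{s}_X(\varepsilon)\le\varepsilon$ and so pins down the modulus exactly; the paper only obtains $\varepsilon/2$ because its two random graphons overlap and partially cancel, so $\|2U_1^*-2\varepsilon U_2^*\|\approx 1$ rather than $1+\varepsilon$. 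Two small remarks: your opening reduction to connected $H$ via Theorem~\ref{thm:factor} is not circular (that theorem is proved independently in Section~\ref{sec:disconnected}), but it is also unnecessary, since nothing in your argument --- cut-norm convergence to a constant and the counting lemma --- uses connectivity of $H$; and the appeal to quasirandomness of Paley graphs ($\|W^{P_q}-\tfrac12\|_{\square}\to 0$ via the expander mixing lemma) should be stated with a citation, but is standard.
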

For the proof, we introduce a random graphon model that generalises graphon representations of the Erd\H{o}s--R\'enyi random graph. 
Let $\mathcal{D}$ be a probability distribution on $[0,1]$ and let $\Omega=\Omega_1\sqcup\ldots\sqcup\Omega_n$ be an arbitrary partition of $\Omega$ into sets of measure $\frac{1}{n}$. 
Denote by~$\mathbb{U}(n,\mathcal{D})$ the random graphon obtained by assigning a constant value generated independently at random by the distribution $\mathcal{D}$
on each $(\Omega_i\times \Omega_j) \cup (\Omega_j\times \Omega_i)$, $1\le i\le j\le n$. 
Although $\mathbb{U}(n,\mathcal{D})$ depends on the partition $\Omega_1\sqcup\ldots\sqcup\Omega_n$,
we shall suppress the dependency parameter as
different $\mathbb{U}(n,\mathcal{D})$'s are `isomorphic' in the sense that there exists a measure-preserving bijection that maps one partition to the other. We use the term \emph{asymptotically almost surely}, or \emph{a.a.s.} for short, in the standard way, i.e., a property $\mathcal{P}$ of $\mathbb{U}(n,\mathcal{D})$ holds a.a.s. if the probability that $\mathcal{P}$ occurs tends to $1$ as $n\rightarrow\infty$. We write $a=b\pm \epsilon$ if and only if $a\in [b-\epsilon,b+\epsilon]$.
\begin{proposition}\label{prop:densitiesU}
Let $\mathcal{D}$ be a probability distribution on $[0,1]$ and let $d=\mathbb{E}[\mathcal{D}]$. Then for any fixed graph $H$,  ~$U\sim\mathbb{U}(n,\mathcal{D})$ satisfies $t(H,U)=d^{e(H)}\pm o_n(1)$ a.a.s.
\end{proposition}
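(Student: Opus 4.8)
The plan is to expand $t(H,U)$ as a normalised sum over all vertex maps $\phi:V(H)\to[n]$ and to show that this sum concentrates about $d^{e(H)}$. We may assume $e(H)\ge 1$, since otherwise $t(H,U)\equiv 1=d^{0}$. Write $\{U_{k\ell}\}_{1\le k\le\ell\le n}$ for the family of independent $\mathcal D$-distributed values assigned to the blocks of $U$, extended symmetrically by $U_{\ell k}:=U_{k\ell}$. As $U$ is constant on every product cell $\Omega_{k_1}\times\cdots\times\Omega_{k_{v(H)}}$, and such a cell has measure $n^{-v(H)}$, for each realisation of $U$ one has
\[
t(H,U)=\frac{1}{n^{v(H)}}\sum_{\phi:V(H)\to[n]}\ \prod_{\{v_i,v_j\}\in E(H)}U_{\phi(v_i)\phi(v_j)},
\]
where $v(H)=|V(H)|$ and every summand lies in $[0,1]$.

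First I would compute $\mathbb E[t(H,U)]$. Call $\phi$ \emph{generic} if it maps distinct edges of $H$ to distinct unordered pairs and maps no edge to a loop; every injective $\phi$ is generic, so all but $O(n^{v(H)-1})$ of the $n^{v(H)}$ maps are generic (here and below the implied constants depend only on $H$). For a generic $\phi$ the values occurring in the product are pairwise distinct members of the independent family $\{U_{k\ell}\}$, so the expected product is exactly $d^{e(H)}$; for the remaining $O(n^{v(H)-1})$ maps the expected product still lies in $[0,1]$. Summing, $\mathbb E[t(H,U)]=d^{e(H)}+O(1/n)$.

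Next comes concentration, via the bounded-difference (McDiarmid) inequality applied to $t(H,U)$ regarded as a function of the $\binom{n+1}{2}$ independent coordinates $U_{k\ell}$. A fixed coordinate $U_{k\ell}$ occurs only in those summands for which some edge of $H$ is mapped onto $\{k,\ell\}$, and there are at most $2e(H)\,n^{v(H)-2}$ such $\phi$; since every summand lies in $[0,1]$, altering one coordinate changes $t(H,U)$ by at most $2e(H)/n^{2}$. Hence the sum of squared differences over all coordinates is $O(1/n^{2})$, and McDiarmid yields, for every fixed $\varepsilon>0$,
\[
\mathbb P\bigl(|t(H,U)-\mathbb E[t(H,U)]|\ge\varepsilon\bigr)\le 2\exp(-c_H\,\varepsilon^{2}n^{2})
\]
for some $c_H>0$, which tends to $0$ as $n\to\infty$. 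Combined with the expectation estimate, this gives $t(H,U)=d^{e(H)}\pm o_n(1)$ a.a.s.

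I do not expect any genuine obstacle: the only points needing care are the two elementary homomorphism counts — bounding the number of non-generic maps, and the number of maps touching a fixed block — both of which are of strictly lower order. One could equally run a second-moment argument, expanding $\mathbb E[t(H,U)^{2}]$ as a normalised double sum whose dominant contribution, from pairs of maps with disjoint edge-images, is $d^{2e(H)}+o(1)$, and then apply Chebyshev; I would present the McDiarmid version, as it is shorter to write out in full.
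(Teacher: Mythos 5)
Your argument is correct, and it is essentially the proof the paper has in mind: the paper omits the argument, citing the standard concentration of subgraph densities in Erd\H{o}s--R\'enyi-type random models, and your expansion of $t(H,U)$ over vertex maps, the first-moment computation via generic maps, and the bounded-difference (McDiarmid) estimate in the coordinates $U_{k\ell}$ is precisely that standard route, written out in full and correctly adapted to an arbitrary distribution $\mathcal{D}$ on $[0,1]$.
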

We omit the proof, as it is a straightforward application of the standard concentration inequalities to subgraph densities in Erd\H{o}s--R\'enyi random graphs (see, for example, \cite[Corollary~10.4]{Lovasz2012}).

\begin{proof}[Proof of Theorem~\ref{thm:rHnonnconvexnonsmooth}]
Throughout the proof, we briefly write $\|\cdot\|_{r(H)}=\|\cdot\|$.
 For $x\in[0,1]$, denote by $\mathbf{1}\{x\}$ the Dirac measure on $x$. Set
 \begin{align*}
     \mathcal{D}_1&:=\tfrac12 \cdot\mathbf{1}\{0\}+\tfrac12 \cdot\mathbf{1}\{1\}.
 \end{align*}

 Let $U_1$ and $U_2$ be two independent copies of $\mathbb{U}(n,\mathcal{D}_1)$. 
 Proposition~\ref{prop:densitiesU} then implies a.a.s. 
\begin{equation}\label{eq:firstruss}
\left\| U_{i}\right\|=t(H,U_i)^{1/e(H)}=\tfrac{1}{2}\pm o_{n}(1), \quad\mbox{for $i=1,2$.}
\end{equation}
For each $i=1,2$, let $U^*_i:=\frac{1}{2\left\| U_{i}\right\|}\cdot U_i$ be the normalisation of $U_i$ that satisfies $\| U^*_{i}\|=\frac{1}{2}$.
Then the triangle inequality together with~\eqref{eq:firstruss} implies
\begin{equation}\label{eq:normdiff}
\|U^*_i-U_i\|\leq\big|\|U^*_i\|-\|U_i\|\big|=o_n(1)\;.
\end{equation}

Since the random graphon $\left|U_{1}-U_{2}\right|$ is also distributed like $\mathbb{U}(n,\mathcal{D}_1)$, we again have $
	\left\| U_{1}-U_{2}\right\|=\frac{1}{2}\pm o_{n}(1)
$ a.a.s. Thus, by the triangle inequality and~\eqref{eq:normdiff}, $2U^*_{1}$ and	$2U^*_{2}$ are two symmetric functions with $\| 2U^*_{1}\|=\| 2U^*_2\|=1$ whose linear combination is always close to the corresponding one of $U_1$ and $U_2$, i.e., for any fixed $\alpha,\beta\in\mathbb{R}$,
\begin{align}\label{eq:linearcombi}
    \big|\left\| \alpha U_1+\beta U_2 \right\|-\left\| \alpha U^*_1+\beta U^*_2 \right\|\big|
    \leq|\alpha|\left\| U_1-U^*_1\right\|+|\beta|\left\|U_2-U^*_2 \right\|= o_n(1).
\end{align}
In particular, $\alpha=2$ and $\beta=-2$ give 
	$
	\left\| 2U^*_{1}-2U^*_{2}\right\|\ge \left\| 2U_{1}-2U_{2}\right\|-o_n(1)=1\pm o_{n}(1).
	$
That is, $2U_1^*$ and $2U_2^*$ are points on the unit sphere that are `far' apart. 
Setting $\alpha=\beta=1$ in~\eqref{eq:linearcombi} gives
    $\big|\left\| U_1+U_2 \right\|-\left\| U^*_1+U^*_2 \right\|\big|
    = o_n(1)$, and therefore, for any $0<\varepsilon<1$,
	\begin{align}\label{eq:modconvex}
	    \mathfrak{d}_{r(H)}(\varepsilon)\le 1-\left\| \frac{2U^*_1+2U^*_2}{2}\right\|=1-\left\| \frac{2U_1+2U_2}{2}\right\|\pm o_n(1).
	\end{align}
Now let
\begin{align*}
 \mathcal{D}_2&:=\tfrac14 \cdot\mathbf{1}\{0\}+\tfrac12 \cdot\mathbf{1}\{\tfrac12\}+\tfrac14 \cdot\mathbf{1}\{1\}.
\end{align*}
Then, since $\frac12(U_{1}+U_{2})$ has distribution~$\mathbb{U}(n,\mathcal{D}_2)$ and $\mathbb{E}[\mathcal{D}_2]=\frac12$, we have by Proposition~\ref{prop:densitiesU} a.a.s. $\left\| U_{1}+U_{2}\right\|=1\pm o_{n}(1)$.
Substituting this into~\eqref{eq:modconvex} proves that the modulus of convexity of $\|\cdot\|$ is~0 for each $\varepsilon\in(0,1)$.

\medskip

For $\varepsilon\in(0,1)$ given in~\ref{en:smoothness}, let
\begin{align*}
    \mathcal{D}_3:=\tfrac14 \big(\mathbf{1}\{0\}+
    \mathbf{1}\{\varepsilon\}+\mathbf{1}\{1-\varepsilon\}+\mathbf{1}\{1\}\big)~\text{ and }~
\mathcal{D}_4:=\tfrac14 \big(\mathbf{1}\{0\}+\mathbf{1}\{\tfrac{\varepsilon}{2}\}+\mathbf{1}\{\tfrac12\}+\mathbf{1}\{\tfrac{1+\varepsilon}{2}\}\big).
\end{align*}
The distributions of $\left|U_{1}-\varepsilon U_{2}\right|$ and $\frac12\left|U_{1}+\varepsilon U_{2}\right|$ are $\mathbb{U}(n,\mathcal{D}_3)$ and  $\mathbb{U}(n,\mathcal{D}_4)$, respectively.
As $\mathbb{E}[\mathcal{D}_3]=\frac12$ and $\mathbb{E}[\mathcal{D}_4]=\frac{1+\varepsilon}4$, Proposition~\ref{prop:densitiesU} yields that, a.a.s., $\|2U_{1}-2\varepsilon U_{2}\|=1\pm o_n(1)$ and $\|2U_{1}+2\varepsilon U_{2}\|=1+\varepsilon\pm o_n(1)$. 
Therefore, by~\eqref{eq:linearcombi}, $\|2U^*_{1}-2\varepsilon U^*_{2}\|=1\pm o_n(1)$ and $\|2U^*_{1}+2\varepsilon U^*_{2}\|=1+\varepsilon\pm o_n(1)$ a.a.s. Hence, substituting $2U^*_{1}$ and $2\varepsilon U^*_{2}$ into~\eqref{eq:defmodsmooth} gives
$$\mathfrak{s}_{X}(\varepsilon)\ge \frac{1}{2}\left(\left\| 2U^*_{1}+2\varepsilon U^*_{2}\right\| +\left\| 2U^*_{1}-2\varepsilon U^*_{2}\right\| -2\right)=\frac{\varepsilon}{2}\pm o_n(1),$$
which proves~\ref{en:smoothness}.
\end{proof}

\medskip

\section{Disconnected (semi-)norming and weakly norming graphs}\label{sec:disconnected}
To be precise, we expand Theorem~\ref{thm:factor} to two parallel statements, also omitting any isolated vertices from $H$ (this operation does not change $t(H,\cdot)^{1/e(H)}$).
\begingroup
\def\thetheorem{\ref{thm:factor}}
\begin{theorem}[Restated]
For a graph $H$ without isolated vertices, the following holds:
\begin{enumerate}[label=(\alph*)]
\item A graph $H$ is weakly norming if and only if all connected components of $H$ are isomorphic and weakly norming.
\item A graph $H$ is (semi-)norming if and only if all connected component of $H$ are isomorphic and (semi-)norming.
\end{enumerate}
\end{theorem}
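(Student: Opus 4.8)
The plan is to prove the non-trivial (``only if'') direction of Theorem~\ref{thm:factor}: assuming $H$ is weakly norming (resp.\ seminorming, norming), show that its non-singleton components are pairwise isomorphic and each individually weakly norming (resp.\ seminorming, norming). Write $H = H_1 \sqcup H_2 \sqcup \cdots \sqcup H_k$ for the connected components, with $e(H_j) = m_j$ and $e(H) = m = \sum_j m_j$. The two sub-claims to establish are: (i) each $H_j$ is itself weakly norming, and (ii) the $H_j$ are all isomorphic. I expect (ii) to be the main obstacle, so I will set up the machinery with that in mind.

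For step (i), the idea is that multiplicativity of homomorphism densities over disjoint unions, namely $t(H, W) = \prod_j t(H_j, W)$, lets one isolate a single component by exploiting the density-$d^{e(H)}$ behaviour of the random graphons $\mathbb{U}(n,\mathcal D)$ from Proposition~\ref{prop:densitiesU}, or more elementarily by plugging in kernels supported on a single block $\Omega_i^2$ as in Definition~\ref{def:specialgraphon}. Concretely, suppose $\|\cdot\|_{r(H)}$ is a norm but $\|\cdot\|_{r(H_j)}$ fails to be one for some $j$; by a known characterisation (or directly), weak norming of a connected graph is equivalent to the Hölder-type inequality $t(H_j, |W|) \le \prod t(H_j, |W_{\text{blow-up pieces}}|)^{1/\cdots}$, and the failure of positive-definiteness/triangle-inequality for $H_j$ propagates to $H$ via the product formula once one tensors with a fixed generic kernel on the other components to make their factors nonzero constants. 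The cleanest route: take $U,V$ witnessing failure of the triangle inequality for $\|\cdot\|_{r(H_j)}$, realise $H$ on a product space $\Omega = \Omega' \times \Omega''$ with $W(x,y)$ depending on the $\Omega'$-coordinate through $U$ (resp.\ $V$) only for the edges of $H_j$'s copy and through a fixed positive constant kernel elsewhere; then $\|W\|_{r(H)}$ is a fixed multiple of $\|U\|_{r(H_j)}$, contradicting the triangle inequality for $\|\cdot\|_{r(H)}$. The seminorming and norming variants run the same way with $|\cdot|$ removed and signs tracked, using that $m$ even forces each relevant $m_j$ to behave well.

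For step (ii), the key observation is that if two components $H_1, H_2$ are non-isomorphic then one can build a kernel on which the ``mismatch'' is detected. Here I would use the weakly norming hypothesis in the form: for a weakly norming graph $F$, the quantity $t(F, \cdot)^{1/e(F)}$ dominates, for each edge $uv$ of $F$, the corresponding ``reflected'' quantities, and equality in the relevant Hölder chain forces strong structure. The concrete strategy: pick a generic weighted kernel $W = W_{\gamma, \mathbf a}$ (Definition~\ref{def:specialgraphon}) or a random $\mathbb{U}(n,\mathcal D)$ perturbed on one block, and compare $t(H,W)^{1/m}$ against what it would have to equal if all components were isomorphic to, say, the one with the most edges. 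Since $t(H,W) = \prod_j t(H_j, W)$ and, on a kernel supported on a single block of measure $2^{-i}$ with value $v$, $t(H_j, W) = 2^{-i\, v(H_j)} v^{m_j}$, the exponent of $2^{-i}$ in $t(H,W)$ is $\sum_j v(H_j)$ while that of $v$ is $\sum_j m_j = m$; taking $\mathbf a$ supported on two blocks and demanding the triangle inequality $\|W + W'\|_{r(H)} \le \|W\|_{r(H)} + \|W'\|_{r(H)}$ to hold with the right homogeneity forces the ratio $v(H_j)/m_j$ to be the same for all $j$ --- this is exactly the ``Sidorenko exponent'' matching. That alone does not give isomorphism, so the finishing move is: among graphs with a fixed edge-to-vertex ratio, use that the disjoint union of weakly norming graphs is weakly norming \emph{only if} the pieces share the same ``local profile'' (degree sequence, and inductively neighbourhood structure), which one extracts by testing against kernels that are generic on a single block --- here the multiplicativity $t(H,\cdot) = \prod t(H_j,\cdot)$ combined with the fact (provable from weak norming of $H$ via Proposition~\ref{prop:densitiesU}) that $\|\cdot\|_{r(H)}$ must agree with $\|\cdot\|_{r(H_1)}$ up to a power pins down $\prod_j t(H_j, W) = t(H_1,W)^{m/m_1}$ for all $W \ge 0$, and a known rigidity result for homomorphism densities (two graphs with $t(F_1,\cdot) \equiv t(F_2,\cdot)$ up to the natural reparametrisation are isomorphic, cf.\ the theory of graph homomorphism ``left determinism'') forces $H_j \cong H_1$ for all $j$.

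The main obstacle, as flagged, is step (ii): turning ``same Sidorenko exponent'' into genuine isomorphism. I would handle it by a two-part argument --- first reduce to the case where all $m_j$ are equal (by the homogeneity/exponent-matching above, combined with step (i) giving that each $H_j$ is weakly norming, hence satisfies Sidorenko with its own exponent, and the product can be weakly norming only if these exponents coincide, i.e.\ $v(H_j)/m_j$ constant; a short extremal argument then shows the $m_j$ must actually be equal rather than merely proportional, using connectivity and that weakly norming graphs have no isolated vertices) --- and then, with all components having the same number of edges $m_1 = m/k$, use that $t(H, W) = \prod_{j=1}^k t(H_j, W)$ together with the weak-norming (hence Sidorenko-sharp) bound to deduce $t(H_j, W) = t(H_1, W)$ for every graphon $W$ and every $j$, whence $H_j \cong H_1$ by the injectivity of the homomorphism-density map on isomorphism classes (using that $H_1$, being weakly norming, is in particular connected so this injectivity applies cleanly). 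I expect the write-up to be short once the right test kernels --- the $W_{\gamma,\mathbf a}$ of Definition~\ref{def:specialgraphon} and the random $\mathbb U(n,\mathcal D)$ --- are deployed, with the bulk of the care going into the exponent bookkeeping and the reduction to equal component sizes.
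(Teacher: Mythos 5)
Your endgame is the right one and matches the paper: once all components are known to have the same number of edges, the inequality $t(F_j,W)\le t(H,W)^{e(F_j)/e(H)}=t(H,W)^{1/k}$ (the paper's~\eqref{eq:dominate}, a consequence of Lemma~\ref{lem:Holder}) together with $t(H,W)=\prod_j t(F_j,W)$ forces $t(F_j,W)=t(F_1,W)$ for every graphon $W$, and the moment-injectivity theorem then gives isomorphism — this is exactly Lemma~\ref{lem:weaklynormingIsomorphic}. But two earlier steps of your plan have genuine gaps. First, your step (i) is broken as described: in an evaluation of $\|W\|_{r(H)}$ every edge of every component sees the \emph{same} kernel $W$, so there is no single $W\in\KERNELSPACE$ that ``depends on $U$ only on the edges of $H_j$'s copy and is a constant elsewhere''; that object is a decorated density $t(H,\mathbf w)$, which relates to the norm only via Hatami's H\"older characterisation (Lemma~\ref{lem:Holder}). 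Applying that characterisation with $W_e=U_e$ on $E(F_1)$ and $W_e\equiv 1$ elsewhere yields $t(F_1,\mathbf u)^{e(H)}\le\prod_{e\in E(F_1)}t(H,U_e)$, whose right-hand side still carries the other components, so you cannot conclude that a component is weakly norming before knowing the components are isomorphic. The paper does it in the opposite order: isomorphism first, after which $\|\cdot\|_{r(H)}$ literally equals $\|\cdot\|_{r(F)}$ and the component-wise H\"older inequality (hence the norming property of $F$) follows at once.

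Second, your reduction to equal edge numbers does not work as claimed. Equal average degree plus connectivity plus no isolated vertices is nowhere near enough: the components of $C_4\sqcup C_8$ are connected, norming, and have the same average degree but different edge counts, so no ``short extremal argument'' of the kind you invoke can exist; ruling such unions out is precisely the content of Lemma~\ref{lem:weaklynormingEdgesEqual}, which tests the H\"older inequality of Lemma~\ref{lem:Holder} with the kernels $W_{\gamma,\mathbf c}$ of Definition~\ref{def:specialgraphon} (with $\gamma$ the common vertex-per-edge ratio) decorating only the component with fewest edges, and exploits a sequence $\mathbf c$ with $\|\mathbf c\|_p>\|\mathbf c\|_q$ as in~\eqref{eq:pnormqnorm}. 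Relatedly, your claim that the bare triangle inequality applied to kernels supported on two blocks forces $v(H_j)/e(H_j)$ to be constant is asserted but not proved (Mahler-type superadditivity of the geometric mean shows no contradiction arises automatically there); the paper instead derives equal average degrees from Lemma~\ref{lem:Holder} with a single half-measure block graphon (Lemma~\ref{lem:weaklynormingAvgDeg} and Corollary~\ref{cor:avgdeg}). In short, the skeleton ``same average degree $\Rightarrow$ same edge count $\Rightarrow$ isomorphic components'' is the right one, but the first two arrows need the H\"older characterisation deployed as in the paper, not the norm axioms or Sidorenko-type exponent matching you gesture at, and the ``each component is norming'' claim must come last, not first.
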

\addtocounter{theorem}{-1}
\endgroup

To prove this theorem, we need some basic facts about weakly norming graphs. 
Given a graph $H$ and a collection $\mathbf{w}=(W_e)_{e\in E(H)}\in\KERNELSPACE^{E(H)}$, define 
the \emph{$\mathbf{w}$-decorated homomorphism density} by
$$t(H,\mathbf{w}):=\int_{x_{1}\in\Omega}\ldots\int_{x_{n}\in\Omega}\prod_{e=ij\in E(H)}W_{e}(x_{i},x_{j})\;.$$
That is, we assign a possibly different $W_e$ to each $e\in E(H)$ and are counting such `multicoloured' copies of $H$.
In particular, if $W_e=W$ for all $e\in E(H)$, we obtain $t(H,\mathbf{w})=t(H,W)$.
Hatami~\cite{Hat:Siderenko} observed that the (weakly) norming property is equivalent to a H\"older-type inequality for the decorated homomorphism density.

\begin{lem}[Theorem 2.8 in~\cite{Hat:Siderenko}]\label{lem:Holder}
Let $H$ be a graph. Then
\begin{enumerate}[label=(\alph*)]
    \item $H$ is weakly norming if and only if, for every $w\in \KERNELSPACE_{\ge 0}^{E(H)}$, $$t(H,\mathbf{w})^{e(H)}\le \prod_{e\in E(H)} t(H,W_e).$$ 
    \item $H$ is seminorming if and only if, for every $w\in \KERNELSPACE^{E(H)}$, $$t(H,\mathbf{w})^{e(H)}\le \prod_{e\in E(H)} \left|t(H,W_e)\right|.$$ 
\end{enumerate}
\end{lem}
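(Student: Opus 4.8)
The target is Lemma~\ref{lem:Holder}, the equivalence between the (weakly) norming property of $H$ and a H\"older-type inequality for the decorated density $t(H,\mathbf{w})$. I will prove part~(a); part~(b) is entirely analogous, carrying absolute values and signs through the same argument. One direction is almost immediate, so the work is in the other direction and in verifying that the $\mathcal{W}$-valued inequality is the right bookkeeping device. Throughout write $m=e(H)$ and enumerate $E(H)=\{e_1,\dots,e_m\}$.

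\textbf{Easy direction: the inequality implies weakly norming.}
Assume the inequality $t(H,\mathbf{w})^m\le\prod_{e\in E(H)}t(H,W_e)$ for all $\mathbf{w}\in\KERNELSPACE_{\ge 0}^{E(H)}$. Nonnegativity of $t(H,|\cdot|)$ and homogeneity $\|\lambda W\|_{r(H)}=|\lambda|\,\|W\|_{r(H)}$ are clear from the definition, so the only thing to check is the triangle inequality (and that $\|W\|_{r(H)}=0$ forces $W=0$ a.e., which also follows from the triangle inequality once we have it, via a standard argument, or can be read off from the fact that $t(H,|W|)=0$ with $H$ having no isolated vertices already implies $W=0$ a.e.\ by an easy connectedness/support argument). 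For the triangle inequality, take $U,V\in\KERNELSPACE$ and set $\mathbf{w}^{(j)}$ to be the decoration that puts $|U|$ on $e_1,\dots,e_j$ and $|V|$ on $e_{j+1},\dots,e_m$; then $t(H,|U+V|)\le t\big(H,(|U|+|V|)^{\text{on all edges}}\big)$ expands by multilinearity of $t(H,\cdot)$ in each edge into a sum of $2^m$ terms $t(H,\mathbf{w})$ with each $W_e\in\{|U|,|V|\}$, and each such term is bounded via the hypothesis by $t(H,|U|)^{k/m}t(H,|V|)^{(m-k)/m}=\|U\|_{r(H)}^{k}\|V\|_{r(H)}^{m-k}$ where $k$ is the number of $|U|$-edges. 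Summing the binomial expansion gives $t(H,|U+V|)\le(\|U\|_{r(H)}+\|V\|_{r(H)})^m$, which is the triangle inequality after taking $m$-th roots.

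\textbf{Hard direction: weakly norming implies the inequality.}
This is the substantive part. Assume $\|\cdot\|_{r(H)}$ is a norm. The natural route is a "tensor-power trick": given $\mathbf{w}=(W_e)$, I want to compare $t(H,\mathbf{w})$ with the geometric mean of the $t(H,W_e)$, and the standard way to turn a multilinear quantity into something a single norm can see is to average over a symmetry group. Concretely, I will build, on a product space $\Omega^m$ (or a measure-isomorphic copy of $\Omega$), a single kernel $W$ whose ``diagonal blocks'' are the various $W_{e_i}$ and exploit that every automorphism-type permutation of the roles of the edges leaves $t(H,\cdot)$ invariant in a suitable sense; applying the norm triangle inequality to an average of cyclically-shifted decorations, together with the multilinearity of $t(H,\cdot)$, should yield $t(H,\mathbf{w})\le$ (average of $t(H,W_{e_i})$-type terms), and then an AM--GM / normalization step upgrades the arithmetic mean to the geometric mean $\prod_e t(H,W_e)^{1/m}$. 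A cleaner implementation, which I expect to use, is: first reduce to the case $t(H,W_e)=1$ for all $e$ by scaling each $W_e$ (legitimate since both sides of the desired inequality are appropriately homogeneous and $t(H,W_e)\ge 0$ for $W_e\ge 0$ — and one handles the degenerate case $t(H,W_e)=0$ separately, where the left side must also vanish by a support/connectedness argument), and then show directly that $t(H,\mathbf{w})\le 1$. For the latter, consider the kernel $W:=\frac1m\sum_{i=1}^m c_i W_{e_i}$ for suitable signs/weights, or better, use the product-space construction to place the $W_{e_i}$ on disjoint pieces and apply the triangle inequality for $\|\cdot\|_{r(H)}$ to $\big\|\frac1m\sum_i W_{e_i}'\big\|_{r(H)}\le\frac1m\sum_i\|W_{e_i}'\|_{r(H)}=1$, where $W_{e_i}'$ are the blown-up copies; expanding $t(H,|\frac1m\sum W_{e_i}'|)$ by multilinearity produces $m^{-m}$ times a sum that contains $t(H,\mathbf{w})$ (times the number of ways to realize it) as one of its nonnegative summands, giving $t(H,\mathbf{w})\le 1$ up to a combinatorial constant that I then absorb by taking tensor powers and letting the power tend to infinity. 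The main obstacle is getting this constant to be exactly $1$ rather than some $H$-dependent factor: the tensor-power trick (replace $H$ by nothing, but replace each $W_e$ by $W_e^{\otimes N}$ on $\Omega^N$, use $t(H,W^{\otimes N})=t(H,W)^N$, run the estimate, take $N$-th roots, send $N\to\infty$) is what kills the constant, and checking that the combinatorial multiplicities and cross terms behave correctly under this limit is the delicate bookkeeping. Part~(b) follows the same scheme with $|t(H,W_e)|$ in place of $t(H,W_e)$ and the observation that seminorming gives the triangle inequality for $\|\cdot\|_H$ on all of $\KERNELSPACE$; the sign issues are harmless because we only ever need an upper bound on $t(H,\mathbf{w})^m$ and $t(H,\mathbf{w})\le t(H,(|W_e|)_e)$.
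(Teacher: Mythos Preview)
The paper does not prove this lemma at all; it is quoted verbatim as Theorem~2.8 of Hatami~\cite{Hat:Siderenko} and used as a black box. So there is no in-paper proof to compare your proposal against.

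On the substance of your plan: for part~(a) your outline is correct and is essentially the standard argument. The ``disjoint pieces'' detour is unnecessary --- after normalising $t(H,W_e)=1$, apply the triangle inequality directly to $\sum_{e}W_e$ on $\Omega$ to get $t\big(H,\sum_e W_e\big)\le m^m$, expand by multilinearity as $\sum_{\phi:E\to E} t\big(H,(W_{\phi(e)})_e\big)$, observe every summand is nonnegative so the $\phi=\mathrm{id}$ term alone gives $t(H,\mathbf{w})\le m^m$, and then your tensor-power step $t(H,\mathbf{w})^N=t(H,\mathbf{w}^{\otimes N})\le m^m$ kills the constant. The easy direction is fine as written.

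Part~(b), however, is \emph{not} ``entirely analogous'', and your last sentence contains a genuine gap. The reduction $|t(H,\mathbf{w})|\le t\big(H,(|W_e|)_e\big)$ followed by part~(a) yields a bound by $\prod_e t(H,|W_e|)$, not by $\prod_e |t(H,W_e)|$; the inequality $t(H,|W|)\le |t(H,W)|$ you would then need goes the wrong way. And in the hard direction the multilinear expansion of $t\big(H,\sum_e W_e\big)$ now has terms of both signs, so you cannot drop the cross terms to isolate $t(H,\mathbf{w})$; the tensor-power trick alone no longer closes. Hatami's treatment of~(b) uses an extra convexity argument for the one-variable polynomial $s\mapsto t(H,V+sU)$ (exploiting that $|g|^{1/m}$ convex forces strong coefficient inequalities for a degree-$m$ polynomial $g$), which is a genuinely different ingredient from the nonnegative case and is missing from your sketch.
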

As the second inequality is more general than the first, it immediately follows that every seminorming graph is weakly norming.
Another easy consequence of this charaterisation is that, for a weakly norming graph $H$, its subgraph $F$, and $W\in\KERNELSPACE_{\geq 0}$, we have the inequality
\begin{align}\label{eq:dominate}
   t(F,W)\leq t(H,W)^{e(F)/e(H)}.
\end{align}
Indeed, one can easily prove this by setting $W_e=W$ for $e\in E(F)$ and $W_e\equiv1$ otherwise.
For yet another application, we use Lemma~\ref{lem:Holder} to prove that a weakly norming graph essentially has no subgraph with larger average degree.
\begin{lem}\label{lem:weaklynormingAvgDeg}
Let $H$ be a weakly norming graph without isolated vertices and let $F$ be its subgraph. Then $\frac{e(F)}{v(F)}\leq \frac{e(H)}{v(H)}$.
\end{lem}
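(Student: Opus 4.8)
The plan is to feed one carefully chosen nonnegative kernel into the domination inequality~\eqref{eq:dominate} and then read off the bound on average degrees. As a harmless preliminary reduction I may assume that $F$ has no isolated vertices and at least one edge: deleting an isolated vertex of $F$ yields a subgraph of $H$ with the same edge set but one fewer vertex, which only \emph{increases} the ratio $e(F)/v(F)$, while the edgeless case is trivial.

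Since $\nu$ is atomless, fix a set $S\subseteq\Omega$ with $\nu(S)=\varepsilon$ for some $\varepsilon\in(0,1)$ and set $W:=\mathbf{1}_{S\times S}\in\KERNELSPACE_{\ge 0}$, the indicator of $S\times S$. As every vertex of $F$ is incident to an edge, the product in the definition of $t(F,W)$ satisfies $\prod_{ij\in E(F)}W(x_i,x_j)=\prod_{v\in V(F)}\mathbf{1}_{S}(x_v)$, so integrating gives $t(F,W)=\nu(S)^{v(F)}=\varepsilon^{v(F)}$; the same computation for $H$, which has no isolated vertices by hypothesis, gives $t(H,W)=\varepsilon^{v(H)}$.

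Substituting these into~\eqref{eq:dominate} yields $\varepsilon^{v(F)}\le\big(\varepsilon^{v(H)}\big)^{e(F)/e(H)}=\varepsilon^{v(H)e(F)/e(H)}$, and since $0<\varepsilon<1$ the map $t\mapsto\varepsilon^{t}$ is strictly decreasing, forcing $v(F)\ge v(H)\,e(F)/e(H)$; rearranging gives $\frac{e(F)}{v(F)}\le\frac{e(H)}{v(H)}$, as desired. I expect no real obstacle here beyond identifying the right test kernel and the minor bookkeeping with isolated vertices — the statement then falls out of~\eqref{eq:dominate}, equivalently of the H\"older-type characterisation in Lemma~\ref{lem:Holder}.
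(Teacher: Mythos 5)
Your proof is correct and is essentially the paper's own argument: the paper also tests the H\"older-type inequality (in the form of~\eqref{eq:dominate}, derived from Lemma~\ref{lem:Holder}) against the indicator kernel of $X\times X$ with $\nu(X)=\tfrac12$, i.e.\ your construction with $\varepsilon=\tfrac12$, and compares exponents in exactly the same way. Your handling of isolated vertices of $F$ and of the edgeless case is a minor bookkeeping difference only.
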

\begin{proof}
We may assume $F$ has no isolated vertices either, as adding isolated vertices only reduces the average degree.
Let $X\subseteq\Omega$ be a subset with $\nu(X)=1/2$ and let $U:\Omega^2\rightarrow [0,1]$ be the graphon defined by $W(x,y)=1$ if $x,y\in X$ and $0$ otherwise.
Then, for any graph $J$ without isolated vertices, $t(J,U)=2^{-v(J)}$.
Choosing $W_e=U$ for $e\in E(F)$ and $W_e\equiv1$ otherwise for $\mathbf{w}\in \KERNELSPACE_{\geq 0}^{E(H)}$ then gives
\begin{align*}
    t(F,U)^{e(H)}=t(H,\mathbf{w})^{e(H)}\leq t(H,U)^{e(F)}t(H,1)^{e(H)-e(F)}=t(H,U)^{e(F)}.
\end{align*}
Comparing $t(F,U)^{e(H)}=2^{-v(F)e(H)}$ and $t(H,U)^{e(F)}=2^{-v(H)e(F)}$ concludes the proof.
\end{proof}
\begin{remark}
This is reminiscent of~\cite[Theorem~2.10(i)]{Hat:Siderenko}, which states that $\frac{e(F)}{v(F)-1}\leq \frac{e(H)}{v(H)-1}$ whenever $H$ is weakly norming and $F$ is a subgraph of~$H$ with $v(F)>1$. However, this theorem is only true if $H$ is connected and hence also needs to be corrected. To see this, let $H$ be a vertex disjoint union of two copies of $K_{1,2}$, which is a norming graph. Then $\frac{e(H)}{v(H)-1}=4/5$ but $\frac{e(F)}{v(F)-1}=1$ for $F=K_{1,2}$.
\end{remark}

Suppose now that a weakly norming graph $H$ without isolated vertices consists of two vertex-disjoint subgraphs $F_1$ and $F_2$. If $e(F_1)/v(F_1)>e(F_2)/v(F_2)$, then
\begin{align*}
    \frac{e(H)}{v(H)} = \frac{e(F_1)+e(F_2)}{v(F_1)+v(F_2)}
    <\frac{e(F_1)}{v(F_1)},
\end{align*}
which contradicts to Lemma~\ref{lem:weaklynormingAvgDeg}. By iterating this, we obtain the following fact.
\begin{cor}\label{cor:avgdeg}
    Every component in a weakly norming graph without isolated vertices has the same average degree.
\end{cor}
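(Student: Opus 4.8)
The plan is to deduce this at once from Lemma~\ref{lem:weaklynormingAvgDeg} by a short extremal argument. Let $C_1,\dots,C_k$ be the components of $H$; if $k\le 1$ there is nothing to prove, so assume $k\ge2$. Since $H$ has no isolated vertices, every $C_i$ carries at least one edge, so each ratio $r_i:=e(C_i)/v(C_i)$ is well defined and positive. Suppose for contradiction that the $r_i$ are not all equal, and fix $i=1$ with $r_1=\max_i r_i$; then $r_1>r_j$ for some $j\ge2$.

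First I would observe that, with $F_2:=C_2\cup\dots\cup C_k$, one has $e(F_2)=\sum_{i\ge2}r_i\,v(C_i)<r_1\sum_{i\ge2}v(C_i)=r_1\,v(F_2)$, where the strict inequality comes from the term $i=j$ together with the positivity of the $v(C_i)$; hence $e(F_2)/v(F_2)<r_1=e(C_1)/v(C_1)$. Then, applying the elementary mediant inequality (for positive reals, $a/b<c/d$ forces $a/b<(a+c)/(b+d)<c/d$) to $C_1$ and $F_2$, I would conclude $e(H)/v(H)=(e(C_1)+e(F_2))/(v(C_1)+v(F_2))<e(C_1)/v(C_1)$. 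On the other hand, $C_1$ is a subgraph of $H$ with no isolated vertices, so Lemma~\ref{lem:weaklynormingAvgDeg} gives $e(C_1)/v(C_1)\le e(H)/v(H)$, a contradiction. Therefore all $r_i$ agree, which is exactly the claim.

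I do not anticipate any real obstacle; the two points deserving a little care are keeping every inequality \emph{strict} when $k\ge3$ --- which is precisely why one should single out a component of \emph{maximum} average degree rather than an arbitrary pair, as in the two-component case worked out just before the statement --- and noting that deleting whole components of $H$ never creates isolated vertices, so that Lemma~\ref{lem:weaklynormingAvgDeg} does apply to the subgraph $C_1$ as stated. This is simply the ``iteration'' alluded to above, repackaged into one clean application of the mediant inequality.
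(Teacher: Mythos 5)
Your argument is correct and is essentially the paper's: both deduce the claim from Lemma~\ref{lem:weaklynormingAvgDeg} via the mediant inequality applied to a densest component versus the rest of the graph. The paper phrases this as iterating the two-component comparison, while you package it as a single application by grouping all remaining components together, which is only a presentational difference.
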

Before proceeding to the next step, we recall some basic facts about $\ell^p$-spaces. For $0<p<q\le +\infty$ we have $\|\cdot \|_p \ge \|\cdot \|_q$. Furthermore, there exists $\mathbf{c}\in\ell^\infty$ such that
\begin{equation}\label{eq:pnormqnorm}
    \| \mathbf{c} \|_p > \| \mathbf{c} \|_q\;.
\end{equation}

\begin{lem}\label{lem:weaklynormingEdgesEqual}
In a weakly norming graph $H$ without isolated vertices, every connected component has the same number of edges.
\end{lem}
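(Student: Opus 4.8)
The plan is to argue by contradiction, exhibiting a failure of the triangle inequality for $\|\cdot\|_{r(H)}$ when the components have unequal edge counts. Suppose the non-singleton components of $H$ are $C_1,\dots,C_k$ with $p_j:=e(C_j)$ not all equal, and put $P:=e(H)=\sum_{j}p_j$. By Corollary~\ref{cor:avgdeg} every component has average degree $2e(H)/v(H)$, so $v(C_j)=\gamma p_j$ for all $j$, where $\gamma:=v(H)/e(H)$. I would test $\|\cdot\|_{r(H)}$ on the functions $W_{\gamma,\mathbf a}$ of Definition~\ref{def:specialgraphon}, with $\mathbf a$ ranging over finitely supported sequences, for which $W_{\gamma,\mathbf a}\in\KERNELSPACE$.

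The first step is the density computation, which is a mild variation of the one carried out after Definition~\ref{def:specialgraphon}. Since $W_{\gamma,\mathbf a}$ vanishes off $\bigcup_i\Omega_i^2$ and each $C_j$ is connected, a nonzero term of $t(H,|W_{\gamma,\mathbf a}|)$ forces all vertices of each $C_j$ into a single part $\Omega_{i_j}$ --- but, unlike in the connected case, the parts $\Omega_{i_1},\dots,\Omega_{i_k}$ may be distinct. The contribution of a placement $(i_1,\dots,i_k)$ is $\prod_j a_{i_j}^{p_j}$, since $\nu(\Omega_{i_j})^{v(C_j)}=2^{-i_j\gamma p_j}$ cancels $(2^{i_j\gamma})^{p_j}$. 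Summing over all placements, $t(H,|W_{\gamma,\mathbf a}|)=\prod_{j=1}^{k}\sum_i|a_i|^{p_j}=\prod_{j=1}^{k}\|\mathbf a\|_{p_j}^{p_j}$, so that $N(\mathbf a):=\|W_{\gamma,\mathbf a}\|_{r(H)}=\bigl(\prod_{j}\|\mathbf a\|_{p_j}^{p_j}\bigr)^{1/P}$. As $\mathbf a\mapsto W_{\gamma,\mathbf a}$ is additive, $N$ is subadditive.

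Next I would choose the test vector. Fix a large integer $m$, set $s:=m^{k/P}$, and let $\mathbf a$ be the sequence with $a_1=s$, with $a_2=\dots=a_{m+1}=1$, and with all other entries $0$; write $\mathbf a=\mathbf a'+\mathbf a''$, where $\mathbf a'$ keeps only the first coordinate. Then $N(\mathbf a')=s=m^{k/P}$ and $N(\mathbf a'')=(m^{k})^{1/P}=m^{k/P}$, so subadditivity gives $N(\mathbf a)\le 2m^{k/P}$. On the other hand $\|\mathbf a\|_{p_j}^{p_j}=s^{p_j}+m=m^{p_jk/P}+m\ge m^{\max(p_jk/P,\,1)}$, hence $N(\mathbf a)^{P}\ge m^{\sigma}$ with $\sigma:=\sum_{j}\max\!\bigl(\tfrac{p_jk}{P},1\bigr)$. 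Since $\sum_j\tfrac{p_jk}{P}=k$ and the $p_j$ are not all equal (so some $p_j<P/k$), we have $\sigma>k$; thus $m^{\sigma/P}\le N(\mathbf a)\le 2m^{k/P}$, which is false once $m^{(\sigma-k)/P}>2$. This contradiction proves the lemma.

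The real content is the choice of $\mathbf a$ together with the observation that $\sigma>k$ is exactly equivalent to the components having distinct edge counts; everything else is bookkeeping. I expect the main point requiring care is the density computation --- specifically, keeping track of the fact that distinct components are free to occupy distinct parts $\Omega_{i_j}$, which is precisely where the disconnected case departs from the connected one --- together with checking that $W_{\gamma,\mathbf a}$ lies in $\KERNELSPACE$ (equivalently, that the finite product $\prod_j\sum_i|a_i|^{p_j}$ is finite) so that $\|\cdot\|_{r(H)}$ is meaningfully applied.
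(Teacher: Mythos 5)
Your proof is correct, but the mechanism for the contradiction is genuinely different from the paper's. Both arguments share the same setup: Corollary~\ref{cor:avgdeg} to get a common average degree $2/\gamma$, the test functions $W_{\gamma,\mathbf a}$ of Definition~\ref{def:specialgraphon}, and the key product formula $t(H,|W_{\gamma,\mathbf a}|)=\prod_j\|\mathbf a\|_{e(C_j)}^{e(C_j)}$, whose derivation (each connected component confined to a single part $\Omega_{i_j}$, but different components free to occupy different parts) you carry out correctly --- this is the same computation as the paper's \eqref{eq:tFalpha} and the expansion of the right-hand side of \eqref{eq:IlswA}. Where you diverge is in how the unequal edge counts are punished: the paper applies the H\"older-type characterisation of weak norming (Lemma~\ref{lem:Holder}) to the decoration that places $|W_{\gamma,\mathbf c}|$ on the edges of a component with the fewest edges and $1$ elsewhere, and then contradicts the strict $\ell^p$-versus-$\ell^q$ gap \eqref{eq:pnormqnorm}; you instead violate the triangle inequality of $\|\cdot\|_{r(H)}$ directly, using the explicit two-scale sequence $a_1=m^{k/P}$, $a_2=\cdots=a_{m+1}=1$ and an exponent count showing $\sigma=\sum_j\max(p_jk/P,1)>k$ exactly when the $p_j$ are not all equal. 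Your route buys a more elementary and quantitative argument that uses only the norm axioms (it bypasses Lemma~\ref{lem:Holder} in this step, though Corollary~\ref{cor:avgdeg} still rests on it) and makes the failure explicit for a concrete bounded step function, so there is no need to invoke sequences in $\ell^\infty$ whose associated $W_{\gamma,\mathbf c}$ may be unbounded; the paper's route is shorter given that Lemma~\ref{lem:Holder} is already the workhorse of the whole section and requires no choice of a specific test vector beyond the soft fact \eqref{eq:pnormqnorm}. Your restriction to finitely supported $\mathbf a$, ensuring $W_{\gamma,\mathbf a}\in\KERNELSPACE$, and your use of $v(C_j)=\gamma p_j$ via Corollary~\ref{cor:avgdeg} are both the right points to flag, and they are handled correctly.
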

\begin{proof}
Let $F_1,\ldots,F_k$ be the connected components of $H$ and let  $\gamma:=\frac{v(F_1)}{e(F_1)}$. By Corollary~\ref{cor:avgdeg},  $\frac2{\gamma}$ is the average degree of all $F_i$, $i=1,2,\ldots,k$. Recall the definition of $W_{\gamma,\mathbf{a}}$ given in Definition~\ref{def:specialgraphon}. For each $\mathbf{a}=(a_1,a_2,\ldots)\in\ell^\infty$ and each connected graph $F$ which also has average degree $\frac2{\gamma}$, and, say, $m$ edges, we have 
\begin{align}
\begin{split}\label{eq:tFalpha}
    t(F,|W_{\gamma,\mathbf{a}}|)&=\sum_i |a_i|^{m} 
    =\|\mathbf{a} \|_{m}^{m}\;.
\end{split}
\end{align}

Suppose that not all the components have the same number of edges. Let $p=\min_j e(F_j)$. We may assume that $p=e(F_1)$. Let $q>p$ be the number of edges in a component with more edges than $F_1$ and let $\mathbf{c}\in \ell^\infty$ be given by~\eqref{eq:pnormqnorm}. 
Define the collection $\mathbf{w}=(W_e)_{e\in E(H)}$ by $W_e=|W_{\gamma,\mathbf{c}}|$ for $e\in E(F_1)$ and $W_e\equiv1$ otherwise. Lemma~\ref{lem:Holder} then gives
\begin{equation}\label{eq:IlswA}
t(F_1,|W_{\gamma,\mathbf{c}}|)^{e(H)}=t(H,\mathbf{w})^{e(H)}\le \prod_{e\in E(H)} t(H,W_e)=t(H,|W_{\gamma,\mathbf{c}}|)^{p}.
\end{equation}
Expanding the term $t(H,|W_{\gamma,\mathbf{c}}|)$ on the right-hand side of~\eqref{eq:IlswA} using~\eqref{eq:tFalpha} yields 
$$t(H,|W_{\gamma,\mathbf{c}}|)=\prod_{j=1}^k t(F_j,|W_{\gamma,\mathbf{c}}|)=\prod_{j=1}^k \|\mathbf{c}\|_{e(F_j)}^{e(F_j)}\;.$$
On the left-hand side of~\eqref{eq:IlswA}, we have by~\eqref{eq:tFalpha} that $t(F_1,|W_{\gamma,\mathbf{c}}|)=\|\mathbf{c}\|_{p}^{p}$. 
Substituting these back to~\eqref{eq:IlswA} gives
$$\|\mathbf{c}\|_{p}^{p\cdot e(H)}\le \left(\prod_{j=1}^k \|\mathbf{c}\|_{e(F_j)}^{e(F_j)}\right)^p,$$
which contradicts to the fact that $\|\mathbf{c}\|_{p}\ge \|\mathbf{c}\|_{e(F_j)}$ for each $j\in[k]$ with at least one of the inequalities being strict.
\end{proof}
\begin{lem}\label{lem:weaklynormingIsomorphic}
For a weakly norming graph $H$ without isolated vertices, all the components of $H$ are isomorphic.
\end{lem}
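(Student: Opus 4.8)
The plan is to show that all components of $H$ have the same homomorphism density function and then invoke the classical fact that this function determines a finite graph up to isomorphism. We build on the two facts already available: by Lemma~\ref{lem:weaklynormingEdgesEqual} the components $F_1,\dots,F_k$ of $H$ all have the same number of edges $m_0$, and by Corollary~\ref{cor:avgdeg} they all have the same average degree; since the average degree equals $2m_0/v(F_i)$, they therefore also all have the same number of vertices, say $n_0$.

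First I would apply the subgraph domination inequality~\eqref{eq:dominate} to each component $F_i$, viewed as a subgraph of the weakly norming graph $H$: since $e(F_i)/e(H)=m_0/(km_0)=1/k$, this gives $t(F_i,W)\le t(H,W)^{1/k}$ for every $W\in\KERNELSPACE_{\ge 0}$, in particular for every graphon $W$. On the other hand, because $H$ is the vertex-disjoint union $F_1\sqcup\dots\sqcup F_k$, the density factorises as $t(H,W)=\prod_{j=1}^k t(F_j,W)$. Combining the two, $t(F_i,W)^k\le\prod_{j=1}^k t(F_j,W)$ holds for every $i$; multiplying these $k$ inequalities yields $\bigl(\prod_i t(F_i,W)\bigr)^k\le\bigl(\prod_j t(F_j,W)\bigr)^k$, which is an equality, so each of the $k$ inequalities must itself be an equality. (The degenerate case $t(H,W)=0$ forces every $t(F_i,W)=0$, since homomorphism densities of nonnegative kernels are nonnegative.) Hence $t(F_i,W)=t(F_j,W)$ for all $i,j$ and all graphons $W$.

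Finally I would specialise $W$ to the graphon $W_G$ of an arbitrary finite graph $G$; using $v(F_i)=v(F_j)=n_0$, the identity $t(F_i,W_G)=t(F_j,W_G)$ becomes $\hom(F_i,G)=\hom(F_j,G)$. As this holds for every $G$, the classical theorem that homomorphism counts determine a finite graph up to isomorphism (see, e.g.,~\cite{Lovasz2012}) gives $F_i\cong F_j$, as desired. The only step requiring a little care is the elementary ``geometric mean'' argument that turns the family of inequalities into the equalities $t(F_i,W)=t(F_j,W)$; the substantive inputs ---~\eqref{eq:dominate}, the factorisation over components, and the determination of a graph by its homomorphism counts --- are all either proved above or classical, so no genuine obstacle remains.
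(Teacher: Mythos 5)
Your proposal is correct and rests on exactly the same two ingredients as the paper's proof: the domination inequality~\eqref{eq:dominate} applied to each component (so that $t(F_i,W)\le t(H,W)^{1/k}$, using that all components have $e(H)/k$ edges by Lemma~\ref{lem:weaklynormingEdgesEqual}), and the fact that homomorphism densities/counts determine a graph without isolated vertices up to isomorphism (Theorem~5.29 in~\cite{Lovasz2012}). The only difference is organisational --- the paper argues contrapositively with a single graphon on which the component densities differ and compares against the maximal one, while you derive $t(F_i,W)=t(F_j,W)$ for all $W$ directly via the product-of-inequalities argument --- so this is essentially the paper's argument.
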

\begin{proof}
Suppose that there are at least two non-isomorphic graphs amongst all the components $F_1,\ldots,F_k$. By Lemma~\ref{lem:weaklynormingEdgesEqual} we may assume that all $F_i$ have the same number of edges, say $m$. In particular, $e(H)=mk$. By Theorem~5.29 in~\cite{Lovasz2012}, there exists a graphon $U$ so that the numbers $t(F_1,U),\ldots,t(F_k,U)$ are not all equal. 
We may assume that $t(F_1,U)$ attains the maximum amongst $t(F_1,U),\ldots,t(F_k,U)$. Then we have
$t(H,U)=\prod_{i=1}^k t(F_i,U)<t(F_1,U)^k$, which contradicts
\begin{align*}
    t(F_1,U)\leq t(H,U)^{m/e(H)}= t(H,U)^{1/k}
\end{align*}
which follows from~\eqref{eq:dominate}.
\end{proof}
\begin{proof}[Proof of Theorem~\ref{thm:factor}]
Suppose first that $H$ is weakly norming.
Let $F$ be the graph given by Lemma~\ref{lem:weaklynormingIsomorphic} which is isomorphic to every component of $H$ and let $k$ be the number of components of $H$.
Now enumerate the edges in $H$ by $(e,i)\in E(F)\times [k]$, where each $(e,i)$ denotes the edge $e$ in the $i$-th copy of $F$.
 Then each $\mathbf{w}\in \KERNELSPACE^{E(H)}$ can be written as $(\mathbf{w}_1,\mathbf{w}_2,\ldots,\mathbf{w}_k)$,
 where $\mathbf{w}_i = (W_{e,i})_{e\in E(F)}$ such that
 $
     t(H,\mathbf{w})=\prod_{i=1}^k t(F,\mathbf{w}_i).
 $
 Let $\mathbf{u}=(U_e)_{e\in E(F)}\in \mathcal{W}_{\geq 0}^{E(F)}$ be arbitrary.
 Then Lemma~\ref{lem:Holder} together with the choice $\mathbf{w}_1=\mathbf{w}_2=\ldots=\mathbf{w}_k=\mathbf{u}$, i.e., $W_{e,i}=U_e$, implies
 \begin{align}\label{eq:HolderProof}
     \nonumber t(F,\mathbf{u})^{k^2\cdot e(F)}&=t(F,\mathbf{u})^{k\cdot e(H)}=t(H,\mathbf{w})^{e(H)}\\&\leq 
     \prod_{(e,i)\in E(H)}t(H,W_{e,i})
     =\prod_{(e,i)\in E(F)\times [k]}t(F,U_{e})^k
     =\prod_{e\in E(F)}t(F,U_{e})^{k^2}.
 \end{align}
Taking the $k^2$-th root proves that $F$ is weakly norming.
 
 When $H$ is seminorming, we can still apply Lemma~\ref{lem:weaklynormingIsomorphic} to obtain a graph $F$ isomorphic to each component, since $H$ is also weakly norming.
 Thus, the enumeration $E(F)\times [k]$ of $E(H)$ and the factorisation $t(H,\mathbf{w})=\prod_{i=1}^k t(F,\mathbf{w}_i)$ for each $\mathbf{w}=(\mathbf{w}_1,\mathbf{w}_2,\ldots,\mathbf{w}_k)\in \KERNELSPACE^{E(H)}$ remain the same.
 Now let $\mathbf{u}=(U_f)_{f\in E(F)}\in \mathcal{W}^{E(F)}$ be arbitrary.
 Then again by taking $\mathbf{w}_1=\mathbf{w}_2=\ldots=\mathbf{w}_k=\mathbf{u}$ in Lemma~\ref{lem:Holder}, we obtain
 \begin{align*}
     t(F,\mathbf{u})^{k^2\cdot e(F)}=t(H,\mathbf{w})^{e(H)}\leq 
     \prod_{(e,i)\in E(H)}|t(H,W_{e,i})|
     =\prod_{e\in E(F)}|t(F,U_{e})|^{k^2},
 \end{align*}
 which proves that $H$ is seminorming.
 If $H$ is norming, then $|t(F,W)|=|t(H,W)|^{1/k}$ must be nonzero for each nonzero $W\in\KERNELSPACE$. Thus,~$F$ is also norming.
\end{proof}

\section{Concluding remarks}
As mentioned in Section~\ref{sec:Moduli}, Theorem~\ref{thm:factor} yields a full generalisation of Theorem~\ref{thm:HatModuli}.
\begin{thm}\label{thm:ModuliCorrected}
	For each $m\in \mathbb{N}$, there exist constants $C_m,C'_m>0$ such that the following holds: let $H$ be a seminorming graph with $m$ edges in each (isomorphic) non-singleton component. Then the Banach space $(\mathcal{W}_H,\|\cdot\|_H)$ satisfies $C_m\cdot \mathfrak{d}_{\ell^m}\le \mathfrak{d}_{H}\le \mathfrak{d}_{\ell^m}$ and $\mathfrak{s}_{\ell^m}\le \mathfrak{s}_{H}\le C'_m\cdot \mathfrak{s}_{\ell^m}$.
\end{thm}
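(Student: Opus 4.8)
The plan is to bootstrap from Hatami's Theorem~\ref{thm:HatModuli} for connected graphs, using the factorisation Theorem~\ref{thm:factor} to replace $H$ with a single one of its components. Since $m\geq 1$, the graph $H$ has at least one non-singleton component. Deleting all isolated vertices from $H$ affects neither $t(H,\cdot)$ --- integrating out a variable incident to no edge merely multiplies by $\nu(\Omega)=1$ --- nor, consequently, the pair $(\mathcal{W}_H,\|\cdot\|_H)$, so I may assume $H$ has no isolated vertices. As $H$ is seminorming, Theorem~\ref{thm:factor}(b) then supplies a connected seminorming graph $F$ with exactly $m$ edges such that $H$ is a vertex-disjoint union of, say, $k$ copies of $F$; in particular $e(H)=mk$.

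Next I would check that $(\mathcal{W}_H,\|\cdot\|_H)$ and $(\mathcal{W}_F,\|\cdot\|_F)$ are literally the same normed space. For any symmetric measurable $W:\Omega^2\to\mathbb R$, the integrand in~\eqref{eq:defden} for $H$ splits as a product of $k$ factors, one for each copy of $F$, each depending only on its own block of the variables $x_1,\dots,x_n$. By Tonelli applied blockwise, the integral defining $t(H,W)$ converges absolutely precisely when the one defining $t(F,W)$ does, and in that case $t(H,W)=t(F,W)^k$. Hence $\mathcal{W}_H=\mathcal{W}_F$ as sets of functions, and for every $W$ in this common space $\|W\|_H=|t(H,W)|^{1/e(H)}=|t(F,W)|^{1/m}=\|W\|_F$. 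Consequently the two Banach spaces coincide, so their moduli agree: $\mathfrak{d}_H=\mathfrak{d}_F$ and $\mathfrak{s}_H=\mathfrak{s}_F$ as functions.

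It then only remains to apply Theorem~\ref{thm:HatModuli} to the connected seminorming graph $F$, which has exactly $m$ edges. With the constants $C_m,C_m'>0$ furnished by that theorem (depending only on $m$), this gives $C_m\cdot\mathfrak{d}_{\ell^m}\leq\mathfrak{d}_F\leq\mathfrak{d}_{\ell^m}$ and $\mathfrak{s}_{\ell^m}\leq\mathfrak{s}_F\leq C_m'\cdot\mathfrak{s}_{\ell^m}$, and combining with the equalities $\mathfrak{d}_H=\mathfrak{d}_F$, $\mathfrak{s}_H=\mathfrak{s}_F$ yields exactly the asserted bounds.

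There is no real obstacle left to overcome: the substance of the statement is already contained in Theorem~\ref{thm:factor} --- whose non-trivial direction does the heavy lifting --- and in Hatami's analysis of the connected case recalled above, where connectivity is genuinely needed in order to embed $\ell^m$ isometrically into $\mathcal{W}_F$. The only point deserving a careful sentence is the identification $\mathcal{W}_H=\mathcal{W}_F$ together with $t(H,W)=t(F,W)^k$ at the level of the extended density (allowing the value $+\infty$), rather than merely on $\KERNELSPACE$; this is the blockwise Tonelli argument indicated above.
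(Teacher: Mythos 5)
Your proposal is correct and follows the same route the paper intends: the paper derives Theorem~\ref{thm:ModuliCorrected} precisely by combining Theorem~\ref{thm:factor} (all non-singleton components are isomorphic copies of a connected seminorming graph $F$ with $m$ edges, so $\|\cdot\|_H=\|\cdot\|_F$ via $t(H,W)=t(F,W)^k$) with Hatami's Theorem~\ref{thm:HatModuli} applied to $F$. Your extra care about isolated vertices and the identification $\mathcal{W}_H=\mathcal{W}_F$ matches the paper's remarks that removing isolated vertices does not change $t(H,\cdot)^{1/e(H)}$ and that the definition is insensitive to whether one works in $\KERNELSPACE$ or $\mathcal{W}_H$.
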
 
As a consequence, the connectivity condition in Corollary~\ref{cor:HatModuli} can also be removed, i.e., $(\mathcal{W}_H,\|\cdot\|_H)$ is always uniformly convex and uniformly smooth whenever $H$ is seminorming.

There is more literature in the area that has been imprecise when it comes to connectivity, but which can be corrected by Theorem~\ref{thm:factor} to hold in full generality. For instance, Exercise~14.7(b) in~\cite{Lovasz2012} states that every seminorming graph is either a star or Eulerian, which is true only if the seminorming graph is connected. To correct the statement, we may replace a star by a vertex disjoint union of isomorphic stars by using Theorem~\ref{thm:factor}.
Likewise, whenever studying properties of graph norms, one can invoke Theorem~\ref{thm:factor} and focus on connected graphs.
We finally remark that the theorems used in our proofs have no errors concerning connectivity. In particular, Theorem~2.8 in~\cite{Hat:Siderenko} is still valid regardless of connectivity.

\medskip

In~\cite{KMPW:StepSidorenko}, the \emph{step Sidorenko property} is defined to prove that there exists an edge-transitive graph that is not weakly norming (for the precise definition, we refer to~\cite{KMPW:StepSidorenko}), where the proof relies on the fact from~\cite{Lovasz2012} that every weakly norming graph is step Sidorenko.
Moreover, it is shown in~\cite{DGHRR:Parameters} that the converse is also true for connected graphs, i.e., every connected step Sidorenko graph is weakly norming. However, Theorem~\ref{thm:factor} proves that the converse no longer holds for disconnected graphs, as a vertex-disjoint union of non-isomorphic step Sidorenko graphs is again step Sidorenko but not weakly norming. 

\medskip

\noindent
\textbf{Acknowledgements.} Part of this work was carried out while the third author visited the other authors in Prague and he is grateful for their support and hospitality.

\bibliographystyle{plain}
\bibliography{bibl}	 
\end{document}